\documentclass[a4paper]{amsart}
\usepackage{amssymb,amsmath}
\usepackage{mathrsfs} 
\usepackage{enumitem}
\usepackage[capitalize]{cleveref}
\newcounter{alph}

\newtheorem{theo}[alph]{Theorem}

\numberwithin{equation}{section}
\newtheorem{cor}[equation]{Corollary}
\newtheorem{lem}[equation]{Lemma}
\newtheorem{prop}[equation]{Proposition}
\newtheorem{thm}[equation]{Theorem}
\theoremstyle{definition}
\newtheorem{exa}[equation]{Example}
\newtheorem{rem}[equation]{Remark}

\def\N{\mathbb N}
\def\R{\mathbb R}
\def\Z{\mathbb Z}
\def\H{\mathcal H}
\def\P{\mathcal P}
\def\ve{\varepsilon}
\def\vf{\varphi}
\def\la{\langle}
\def\ra{\rangle}
\newcommand{\diam}{\operatorname{diam}}
\newcommand{\Hom}{\operatorname{Hom}}
\newcommand{\grad}{\operatorname{grad}}

\setcounter{tocdepth}{1}

\begin{document}


\title[Diffusions, discretizations, harmonic functions]
{Equivariant discretizations of diffusions and harmonic functions of bounded growth}
\author{Werner Ballmann}
\address
{WB: Max Planck Institute for Mathematics, Vivatsgasse 7, 53111 Bonn.}
\email{hwbllmnn@mpim-bonn.mpg.de}
\author{Panagiotis Polymerakis}
\address
{PP: Max Planck Institute for Mathematics, Vivatsgasse 7, 53111 Bonn.}
\email{polymerp@mpim-bonn.mpg.de}

\thanks{We would like to thank
J\"urgen Jost for pointing out the topic of harmonic functions of bounded growth
and Fran\c{c}ois Ledrappier for helpful comments.
We would also like to thank the Max Planck Institute for Mathematics and the Hausdorff Center for Mathematics in Bonn for their support and hospitality.}

\date{\today}

\subjclass[2010]{53C99, 58J65, 60G50}
\keywords{Diffusion operator, discretization, random walk, harmonic function, covering projection, properly discontinuous action}

\begin{abstract}
For covering spaces and properly discontinuous actions with compatible diffusion operators,
we discuss Lyons-Sullivan discretizations of the associated diffusions and harmonic functions of bounded growth.
\end{abstract}

\maketitle

\section{Introduction}
\label{intro}

We are interested in spaces of harmonic functions of bounded growth.
This topic got started with the work of Yau on harmonic functions on Riemannian manifolds
and his conjecture, solved by Colding and Minicozzi,
that the spaces $\H^d(M)$ of harmonic functions
of polynomial growth of degree at most $d\ge0$ on a complete Riemannian manifold $M$
with non-negative Ricci curvature are of finite dimension \cite{Y1,Y2,CM}.

We consider a  non-compact and connected manifold $M$
together with an (elliptic) diffusion operator $L$ on $M$ that is symmetric on $C^\infty_c(M)$
with respect to a smooth volume element on $M$ (see \cref{suselap}).
The reader not familiar with diffusion operators should think of the Laplacian on Riemannian manifolds.
We are interested in two related scenarios.
In the first, we are given a cocompact covering $p\colon M\to M_0$
and assume that $L$ and the volume element on $M$
are the pull-backs of a diffusion operator $L_0$ and a smooth volume element on $M_0$.
In the second,
we are given a properly discontinuous and cocompact action on $M$ by a group $\Gamma$
and assume that $L$ and the volume element on $M$ are $\Gamma$-invariant.
To avoid case distinctions, we consider an orbifold covering $p\colon M\to M_0$,
where the manifold $M$ is considered with the trivial orbifold struture,
$M_0$ is a closed orbifold, and $L$ and the volume element on $M$
are the pull-backs of a diffusion operator $L_0$ and a smooth volume element on $M_0$.
This setup contains the above two scenarios,
where $M_0$ is the orbit space $\Gamma\backslash M$ in the second scenario.
The Riemannian metric on $M$ associated to $L$ is the pull-back of the Riemannian metric
on $M_0$ associated to $L_0$ and is therefore complete.

Our main results establish a one-to-one correspondence between
$L$-harmonic functions of bounded growth on $M$
and $\mu$-harmonic functions of bounded growth on a given fiber $X\subseteq M$ of $p$,
where $\mu$ belongs to a certain class of families $\mu=(\mu_y)_{y\in M}$ of probability measures on $X$
and where $\mu$-harmonic functions on $X$ are the solutions of the operator $\Delta_\mu$ defined by
\begin{align}
	(\Delta_\mu f)(y) = \sum_{x\in X} \mu_y(x)(f(x)-f(y)).
\end{align}
The classes of $\mu$ used here have their origin in work of Furstenberg \cite[Section 5]{Fu},
were introduced and studied by Lyons and Sullivan \cite[Sections 7 and 8]{LS},
and later refined in \cite[Sections 1 and 2]{BL2}.
For the case of diffusion operators as considered here, they are discussed in \cite[Section 3]{BP}.
We refer to them as LS-measures.
They depend on the choice of data, refered to as LS-data (see \cref{seclsd}).

We say that a function $a\colon[0,\infty)\to\R$ is a \emph{growth function}
if it is monotonically increasing, if $a(0)\ge1$,
and if $a$ is \emph{submultiplicative} in the sense that, for all $r,s\ge0$,
\begin{align}\label{submul}
	a(r+s)\le C_a a(r)a(s).
\end{align}
Besides the constant function $1$,
the functions $(r+1)^\alpha$ and $e^{\alpha r}$ with $\alpha>0$
are the most important growth functions
and give rise to the concepts of polynomial and exponential growth.
Another interesting class are the functions $e^{cr^\alpha}$ with $c>0$ and $0<\alpha<1$,
which are between polynomial and exponential growth.

\begin{exa}\label{fgg}
Let $S$ be a finite and symmetric generating set of a group $\Gamma$
and $N_S(m)$ be the number of elements of $\Gamma$
which can be expressed as a word in $S$ of length at most $m\in\N_0$.
Then $N_S$ is monotonically increasing with $N_S(0)=1$ and $N_S(m+n)\le N_S(m)N_S(n)$.
Since $\lfloor r+s\rfloor\le\lfloor r\rfloor + \lfloor s\rfloor+1$,
$a=a(r)=N_S(\lfloor r\rfloor)$ is a growth function with $C_a=N_S(1)$.
\end{exa}

Replacing $a$ by the function $C_aa$,
the constant $C_a$ in \eqref{submul} disappears.
We say that a growth function $a$ is \emph{subexponential} if
\begin{align*}
	\lim_{r\to\infty}\frac1r\ln a(r) = 0.
\end{align*}
The above functions $(r+1)^\alpha$ with $\alpha>0$ and $e^{cr^\alpha}$ with $c>0$ and $0<\alpha<1$
are examples of subexponential growth functions.

\subsection{Main results}
\label{submai}
We let $X$ be a fiber of $p$, fix an origin $x_0\in X\subseteq M$, and set $|x|=d(x,x_0)$.
For a growth function $a$,
we say that a function $f$ on $M$ or $X$ is \emph{$a$-bounded}
if there is a constant $C_f\ge1$ such that
\begin{align}\label{aboun}
	|f(x)| \le C_f a(|x|)
\end{align}
for all $x\in M$ or $x\in X$, respectively.
By \eqref{submul} and the triangle inequality,
whether or not a function on $M$ or $X$ is $a$-bounded
does not depend on the choice of $x_0$.

We denote by $\H_a(M,L)$ and $\H_a(X,\mu)$ the spaces of $a$-bounded $L$-harmonic functions on $M$ and $a$-bounded $\mu$-harmonic functions on $X$, respectively.
Clearly
\begin{align*}
	\H_a(M,L) \subseteq \H_b(M,L)
	\quad\text{and}\quad
	\H_a(X,\mu) \subseteq \H_b(X,\mu)
\end{align*}
for any two growth functions $a$ and $b$ such that $a\le cb$ for some constant $c>0$.

Our first main result is known in the case of bounded harmonic functions, that is,
for the function $a=1$; see \cite[Theorem 1.11]{BL2} or the earlier \cite[Theorem 1]{Ka2}.

\begin{theo}\label{thmbg}
Suppose that $a$ is a subexponential growth function
and that the LS-data for the LS-measures are appropriately chosen.
Then the restriction of an $a$-bounded $L$-harmonic function on $M$ to $X$
is $a$-bounded and $\mu$-harmonic,
and the restriction map $\H_a(M,L)\to\H_a(X,\mu)$ is an isomorphism.
\end{theo}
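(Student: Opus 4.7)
The plan is to build the inverse of the restriction map as an explicit Poisson-type integral against the LS-measures and to reduce the whole theorem to a single moment estimate. By the LS-construction recalled in \cite{BP}, appropriate LS-data yield a representation formula
\begin{align*}
	h(y) = \sum_{x\in X}\mu_y(x)\, h(x)
\end{align*}
valid for every bounded $L$-harmonic function $h$ on $M$, and more generally whenever the series converges absolutely. Writing $R\colon h\mapsto h|_X$ and, for $f$ on $X$, $\hat f(y):=\sum_{x\in X}\mu_y(x) f(x)$, the plan is to use $E\colon f\mapsto\hat f$ as the candidate inverse of $R$.

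The heart of the argument will be the moment estimate
\begin{align*}
	\sum_{x\in X}\mu_y(x)\, a(|x|) \le C\, a(|y|),\quad y\in M,
\end{align*}
with a constant $C$ independent of $y$. Granting this, the rest is routine bookkeeping. First, $\hat f$ is finite and $a$-bounded on $M$ by the estimate applied to $|f|\le C_f a$; it is $L$-harmonic because the defining mean-value property of the LS-measures with respect to the diffusion transfers to any absolutely summable linear combination of point masses. The restriction $h|_X$ of an $a$-bounded $L$-harmonic function is trivially $a$-bounded, and it is $\mu$-harmonic because $\mu$-harmonicity of $f$ at $y\in X$ is exactly the identity $f(y)=\sum_x\mu_y(x)f(x)$ (each $\mu_y$ being a probability measure). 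Finally, $\widehat{h|_X} = h$ on all of $M$ by the representation formula, and $\hat f|_X = f$ by $\mu$-harmonicity of $f$, so $R$ and $E$ invert one another.

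The main obstacle, and the source of the clause on LS-data, is therefore the moment estimate. The LS-construction causes $\mu_y(x)$ to decay sharply with $d(x,y)$; by tuning the LS-data (shrinking the LS-balls, or equivalently strengthening the Harnack-type constants used in the construction) aggressively enough, the plan is to arrange a uniform exponential moment
\begin{align*}
	\sum_{x\in X}\mu_y(x)\, e^{\lambda d(x,y)} \le A,
\end{align*}
with $A$ independent of $y$ and with $\lambda>0$ that can be made arbitrarily large. The uniformity in $y$ is where cocompactness of the orbifold covering $p$ is essential: once the estimate holds for $y$ in a compact fundamental domain, it propagates to all of $M$ via the group action (or equivalently via pull-back from $M_0$). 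Combining submultiplicativity $a(|x|)\le C_a a(|y|)\, a(d(x,y))$ with the subexponential hypothesis --- which, for any $\ve>0$, furnishes a constant $B_\ve$ with $a(r)\le B_\ve e^{\ve r}$ --- and picking $\ve<\lambda$ then yields
\begin{align*}
	\sum_{x\in X}\mu_y(x)\, a(|x|) \le C_a a(|y|) \sum_{x\in X}\mu_y(x)\, a(d(x,y)) \le C_a B_\ve A\, a(|y|),
\end{align*}
as required. The technical core of the proof is thus the uniform exponential moment bound, and all the quantitative work lies in matching the rate $\lambda$ against the subexponential growth rate of $a$.
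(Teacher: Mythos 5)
Your overall architecture matches the paper's: the inverse of restriction is the Poisson-type extension $\hat f(y)=\sum_{x\in X}\mu_y(x)f(x)$, and everything is reduced to a uniform $a$-moment bound for the LS-measures, obtained by combining a uniform exponential moment estimate (small exponent, propagated from a fundamental domain by cocompactness) with submultiplicativity and subexponentiality of $a$. That reduction is exactly Lemma 3.5 and Theorem 2.10 / Corollary 2.11 of the paper. However, two of your "routine bookkeeping" steps are precisely where the paper's real work lies, and as stated they contain genuine gaps.

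First, the claim that the representation formula $h(y)=\sum_{x\in X}\mu_y(x)h(x)$ holds \emph{``whenever the series converges absolutely''} is false and cannot be taken as an input from the LS-construction. For positive $L$-harmonic functions one only has the inequality $\beta(\mu,F)(h)\le\mu(h)$ in general (Proposition 2.1), and equality requires showing (i) that $h$ is \emph{swept} by $F$ and (ii) that the remainders $\mu_{y,n}(h)$ in the recursive LS-scheme tend to $0$. For unbounded $a$-bounded $h$ this is the heart of the matter: the paper proves sweeping by exhausting $M$ by the sets $U_n$ and showing that the boundary term $\int_{\partial U_n}\ve_y^{U_n\setminus F}(dz)f(z)$ vanishes as $n\to\infty$, which uses the exponential decay $\ve_y^{U_n\setminus F}(\partial U_n)\le(1-c_0)^{n+1}$ against the growth of $f$, and then controls $|\mu_{y,n}(f)|$ by the tails $\tau_{y,n}(\vf)$ of a convergent series. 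Without this argument you have not shown that $E\circ R=\mathrm{id}$, i.e.\ injectivity of restriction. Second, the harmonicity of $\hat f$ does not ``transfer to any absolutely summable linear combination of point masses'': the kernels $y\mapsto\mu_y(x)$ are $L$-harmonic only on $M\setminus F$ and are discontinuous at the points of $X$, so $\hat f$ satisfies the mean value property on the sets $F_x$ only \emph{because} $f$ is $\mu$-harmonic at $x$; the paper's proof of this (Lemma 3.4) explicitly invokes $\mu$-harmonicity of $h$ together with locally uniform convergence of the partial sums, which in turn needs the Harnack-type comparison $\mu_y(z)\le C_D\mu_x(z)$ for $y\in D_x$. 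Finally, your assertion that the exponent $\lambda$ in the exponential moment bound ``can be made arbitrarily large'' by tuning the LS-data is unjustified (and false in general, e.g.\ in the presence of spectral gap); fortunately it is also unnecessary, since for subexponential $a$ any fixed $\lambda>0$ suffices, which is all the paper proves.
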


The precise meaning of the term \lq appropriate\rq\ will be made clear in the text.
In the two setups we consider, appropriate choices of LS-data are always possible,
but are far from being unique.

In the discussion of asymptotic properties of geometric objects,
quasi-isometries play a central role.
Now with respect to a quasi-isometry, an $a$-bounded function is $b$-bounded,
where $b(r)=a(cr)$ for some suitable constant $c\ge1$.
According to this,
we say that two growth functions $a$ and $b$ belong to the same \emph{growth type}
if there is a constant $c\ge1$ such that
\begin{align*}
    a(r/c)/c \le b(r) \le ca(cr)
\end{align*}
for all $r\ge0$.
Clearly, growth types partition the space of growth functions.
Moreover, the property of being subexponential depends only on the type.

Given a growth type $A$, we say that a function $f$ on $M$ or $X$ is $A$-bounded if,
for one or, equivalently, for any $a\in A$, there is a  constant $C_f\ge1$ such that
\begin{align}\label{Aboun}
	|f(x)| \le C_f a(C_f|x|)
\end{align}
for all $x\in M$ or $x\in X$, respectively.

We denote by $\H_A(M,L)$ and $\H_A(X,\mu)$ the spaces of $A$-bounded $L$-harmonic functions on $M$
and $A$-bounded $\mu$-harmonic functions on $X$, respectively.
Our second main result is an immediate consequence of \cref{thmbg}.

\begin{theo}\label{corbg}
Suppose that $A$ is a subexponential growth type
and that the LS-data for the LS-measures are appropriately chosen.
Then the restriction of an $A$-bounded $L$-harmonic function on $M$ to $X$
is $A$-bounded and $\mu$-harmonic,
and the restriction map $\H_A(M,L)\to\H_A(X,\mu)$ is an isomorphism.
\end{theo}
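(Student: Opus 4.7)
The plan is to deduce \cref{corbg} directly from \cref{thmbg} by picking, for each $A$-bounded function that arises, a representative of the growth type $A$ with respect to which the function is $b$-bounded in the sense of \eqref{aboun}.

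The key elementary fact I would verify first is: if $a\in A$ is a subexponential growth function and $C\ge1$, then $b(r):=a(Cr)$ is again a subexponential growth function belonging to $A$. Indeed, $b(0)=a(0)\ge1$, $b$ is monotonically increasing, and $b(r+s)\le C_a b(r)b(s)$; the bounds $a(r/C)/C\le a(r)\le b(r)\le Ca(Cr)$ show $b\in A$; and $\ln b(r)/r=C\cdot\ln a(Cr)/(Cr)\to 0$ gives subexponentiality. Crucially, $|f(x)|\le C_f a(C_f|x|)$ is the same as $|f(x)|\le C_f b(|x|)$ for this $b$, so \eqref{Aboun} collapses to \eqref{aboun} for the shifted representative.

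Fix a representative $a$ of $A$, and assume the LS-data are appropriate in the sense of \cref{thmbg} for every function of the form $a(C\,\cdot\,)$ with $C\ge1$. For $f\in\H_A(M,L)$, choose $C_f\ge1$ as in \eqref{Aboun} and set $b(r)=a(C_f r)$; then $f\in\H_b(M,L)$, so \cref{thmbg} yields that $f|_X$ lies in $\H_b(X,\mu)\subseteq\H_A(X,\mu)$, and $f|_X=0$ forces $f=0$. For surjectivity, given $g\in\H_A(X,\mu)$ with constant $C_g$, the same construction with $b(r)=a(C_g r)$ makes $g$ lie in $\H_b(X,\mu)$, and \cref{thmbg} supplies a preimage $f\in\H_b(M,L)\subseteq\H_A(M,L)$ that restricts to $g$.

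The main obstacle is therefore technical rather than conceptual: ensuring that a single choice of LS-data is simultaneously appropriate for the entire family $\{a(C\,\cdot\,):C\ge 1\}$. I expect this to follow either because the notion of appropriate LS-data, once spelled out in the text, depends only on the growth type of $a$ and hence transfers to the whole family at once, or because the relevant conditions on the LS-data become monotonically easier as $C$ grows (since $a(C\,\cdot\,)$ dominates $a$), so that one appropriate choice suffices uniformly in $C\ge 1$.
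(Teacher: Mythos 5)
Your proof is correct and is exactly the paper's (implicit) argument: the paper simply declares that \cref{corbg} is an immediate consequence of \cref{thmbg}, and your reduction of $A$-boundedness to $b$-boundedness for the rescaled representative $b(r)=a(C_f r)$, together with the check that $b$ is again a subexponential growth function in the type $A$, is precisely how that deduction goes. Your one residual worry about the LS-data being ``simultaneously appropriate'' is not an issue: appropriateness amounts to the LS-measures having finite $a$-moments (\cref{thmba}), and by \cref{exmom} and \cref{amom} a single choice of the equivariant LS-data gives finite exponential moments and hence finite $b$-moments for every subexponential growth function $b$ at once.
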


\subsection{Applications}
\label{subap}
We discuss three applications of our results to the case of $L$-harmonic functions of polynomial growth,
that is, the growth types determined by the growth functions $(r+1)^d$, $d\ge1$.
The solution of Yau's conjecture by Colding-Minicozzi \cite{CM},
Gromov's theorem on groups of polynomial growth \cite{Gr},
and the work of Kleiner \cite{Kl} on Gromov's theorem and on harmonic functions of polynomial growth
belong to the background of our discussion.

We assume throughout that $M$ is non-compact and connected
and that the diffusion operator $L$ on $M$ and the volume element
are invariant under a group $\Gamma$,
which acts properly discontinuously and cocompactly on $M$.
Recall that $\Gamma$ is then finitely generated.

We are interested in the spaces $\H^d(M,L)$ of $L$-harmonic functions
of polynomial growth of degree at most $d$, that is, $L$-harmonic functions $h$ on $M$ such that
\begin{align}\label{senod}
	\|h\|_d = \limsup_{|x|\to\infty}\frac{|h(x)|}{|x|^d} < \infty.
\end{align}
The space of bounded $L$-harmonic functions is then written as $\H^0(M,L)$, and we have
\begin{align*}
	\H^0(M,L) \subseteq \H^1(M,L) \subseteq \H^2(M,L) \subseteq \dots
\end{align*}
It is well known and easy to see that $H^0(M,L)$ consists either of constant functions only,
and then $\dim\H^0(M,L)=1$, or that $\dim\H^0(M,L)=\infty$.
By \cite[Theorem 3]{LS}, the latter holds if $\Gamma$ is not amenable.
We discuss $\H^d(M,L)$ for $d\ge1$.
Our strategy consists of combining results of Meyerovitch, Perl, Tointon, and Yadin \cite{MPTY,MY,Pe}
about $\mu$-harmonic functions on groups and translating them using \cref{corbg}.
More detailed references will be given in the text.

In the proofs of the first two of our applications, Theorems \ref{cornil} and \ref{thmnil},
we also use work of Kuchment and Pinchover \cite{KP} on harmonic functions of Schr\"odinger operators
in the case where $\Gamma$ contains $\Z$ or $\Z^2$ as a subgroup of finite index,
due to a symmetry question concerning LS-measures.
Via renormalization as discussed in \cref{suselap},
their \cite[Theorem 5.3]{KP} on Schr\"odinger operators actually implies Theorems \ref{cornil} and \ref{thmnil}
in the case where $\Gamma$ is almost Abelian. 

Our first application is related to a special case of a Liouville theorem of Cheng,
namely that a harmonic function on a complete Riemannian manifold of non-negative Ricci curvature
is bounded if it is of sublinear growth \cite[p.\,151]{Ch}.

\begin{theo}\label{cornil}
If $\Gamma$ is virtually nilpotent and $h$ is a harmonic function on $M$ of polynomial growth,
then the growth of $h$ is integral.
More precisely, if $h\in \H^d(M,L)$ for some integer $d\ge1$,
then $\|h\|_d$ is either positive or else $h\in\H^{d-1}(M,L)$.
\end{theo}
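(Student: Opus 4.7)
The plan is to apply \cref{corbg} to the polynomial growth type and then invoke the integrality of growth degrees for $\mu$-harmonic functions on virtually nilpotent groups, as established in \cite{MPTY,MY,Pe}. Write $a_d(r)=(r+1)^d$ and $A_d$ for its growth type. Since $h$ is harmonic, hence continuous, the finiteness condition in \eqref{senod} is equivalent to a uniform bound $|h(x)|\le C_h(|x|+1)^d$ on all of $M$, so that $\H^d(M,L)=\H_{A_d}(M,L)$ and analogously $\H^{d-1}(M,L)=\H_{A_{d-1}}(M,L)$. The growth type $A_d$ is subexponential, so for appropriate LS-data \cref{corbg} yields restriction isomorphisms $\H_{A_d}(M,L)\cong\H_{A_d}(X,\mu)$ and $\H_{A_{d-1}}(M,L)\cong\H_{A_{d-1}}(X,\mu)$. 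The implication we need then reads: if $h\in\H_{A_d}(M,L)$ and $h|_X\in\H_{A_{d-1}}(X,\mu)$, then by injectivity of the isomorphism at level $d$ we conclude $h\in\H_{A_{d-1}}(M,L)$.

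Next I would identify $X$ with the orbit $\Gamma\cdot x_0$ and, via the Milnor--\v{S}varc lemma applied to the properly discontinuous, cocompact $\Gamma$-action on $M$, conclude that $|x|=d(x,x_0)$ on $X$ is comparable to the word-length on $\Gamma$ with respect to any finite symmetric generating set $S$ (cf.\ \cref{fgg}). Under this identification, $\H_{A_d}(X,\mu)$ is exactly the space of $\mu$-harmonic functions on $\Gamma$ of polynomial growth of degree at most $d$. The theorem is thereby reduced to the purely group-theoretic statement that for $\Gamma$ virtually nilpotent and $\mu$ admissible, a $\mu$-harmonic function on $\Gamma$ whose growth is $o(|\gamma|^d)$ is already of growth $O(|\gamma|^{d-1})$. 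This integrality of growth degrees for admissible random walks on virtually nilpotent groups is exactly the content (or an immediate consequence) of \cite{MPTY,MY,Pe}, which one would cite to close the argument.

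The hard part is verifying that the LS-measure $\mu$ supplied by our discretization actually satisfies the hypotheses under which \cite{MPTY,MY,Pe} apply, notably symmetry together with sufficient moment bounds. The appropriateness clause in \cref{corbg} gives considerable freedom in choosing LS-data, and for most virtually nilpotent $\Gamma$ one can in fact arrange the resulting $\mu$ to be symmetric. The delicate cases are the ones flagged in the text, in which $\Gamma$ contains $\Z$ or $\Z^2$ as a subgroup of finite index and such a symmetrization is obstructed. In these cases one would side-step the discretization altogether: the renormalization of \cref{suselap} converts $L$-harmonicity into harmonicity for a Schr\"odinger operator on $M$ (or on $M_0$), and \cite[Theorem 5.3]{KP} of Kuchment--Pinchover applies directly to yield the integrality property for its polynomial-growth harmonic functions, and hence for $h$.
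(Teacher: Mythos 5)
Your proposal follows essentially the same route as the paper: reduce via \cref{corbg} to $\mu$-harmonic functions on $\Gamma$, invoke the polynomiality/integrality results of \cite{MPTY} (with the symmetry and exponential-moment hypotheses supplied by \cref{court}), and handle the obstructed cases---precisely when $\Gamma$ is virtually $\Z$ or $\Z^2$, equivalently when the diffusion is recurrent---by renormalization and \cite[Theorem 5.3]{KP}. The only cosmetic difference is how the improved bound is transported back from $X$ to $M$: you use injectivity of the restriction isomorphism at level $d$ together with surjectivity at level $d-1$, whereas the paper argues directly with \cref{mubound}; both are valid.
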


For $g\in\Gamma$ and a function $f$ on $\Gamma$,
we define the \emph{partial derivative $\partial_gf$} by
\begin{align*}
	\partial_gf(h) = f(gh) - f(h). 
\end{align*}
We say that $f$ is a \emph{polynomial of degree at most $d$}
if all iterated partial derivatives
\begin{align*}
	\partial_{g_0}\cdots\partial_{g_d}f
\end{align*}
of $f$ vanish for all $d+1$ elements $g_0,\dots,g_d\in\Gamma$
and denote by $\P^d(\Gamma)$ the space of all such polynomials
(with the convention $\P^d(\Gamma)=\{0\}$ for $d<0$).

\begin{exa}\label{exapol}
Consider the free Abelian group $\Gamma=\Z^k$.
Clearly, with respect to the usual inclusion $\Z^k\subseteq\R^k$,
any polynomial on $\Z^k$ of degree at most $d$
is the restriction of a polynomial of degree at most $d$ on $\R^k$.
Thus restriction defines an isomorphism $\P^d(\R^k)\to\P^d(\Z^k)$.
\end{exa}

Since $\Gamma$ is finitely generated,
$\P^d(\Gamma)$ is of finite dimension for any $d\ge0$ \cite[Proposition 1.15]{Le}.
In fact, there is a recursive schema for its dimension in terms of the lower central series of $\Gamma$ \cite[Proposition 1.10]{MPTY}.

\begin{exa}\label{exacoh}
By definition, $\P^0(\Gamma)$ is equal to the space of constant real valued functions on $\Gamma$
so that $\P^0(\Gamma)\cong\R$.
Furthermore, $\P^1(\Gamma)$ consists of affine real valued functions on $\Gamma$,
so that $\P^1(\Gamma)\cong\Hom(\Gamma,\R)\oplus\P^0(\Gamma)$.
In particular, $\dim\P^1(\Gamma)-1=b_1(\Gamma,\R)$,
the first Betti number of $\Gamma$ with respect to real coefficients.
\end{exa}

Our second application is the following version of \cite[Corollary 0.10]{CM} of Colding-Minicozzi
and \cite[Theorem 1.3]{Kl} of Kleiner.

\begin{theo}\label{thmnil}
If $\Gamma$ is virtually nilpotent,
then $\H^d(M,L)$ is finite-dimensional for all $d\ge0$.
More precisely, if $N\subseteq\Gamma$ is a nilpotent subgroup of finite index,
then
\begin{align*}
	\dim\H^d(M,L) = \dim\P^d(N) - \dim \P^{d-2}(N)
\end{align*}
for all $d\ge0$.
In particular, $\dim\H^d(M,L)$ does not depend on $L$.
\end{theo}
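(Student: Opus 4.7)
The plan is to combine \cref{corbg} with known dimension counts for $\mu$-harmonic functions of polynomial growth on virtually nilpotent groups. Let $A$ be the growth type determined by $(r+1)^d$; it is subexponential, and it is straightforward from the definitions in \eqref{senod} and \eqref{Aboun} that $\H^d(M,L)=\H_A(M,L)$ and likewise on $X$. Since $\Gamma$ acts properly discontinuously and cocompactly on $M$ and $X$ is a fiber of the orbifold covering $p\colon M\to\Gamma\backslash M$, the $\Gamma$-action (after picking $x_0$ with trivial stabilizer) identifies $(X,\mu)$ with a random walk on $\Gamma$. By \cref{corbg}, restriction to $X$ is then an isomorphism $\H^d(M,L)\to\H_A(X,\mu)$, reducing the problem to counting $\mu$-harmonic functions on $\Gamma$ of polynomial growth of degree at most $d$.

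Next, with the LS-data chosen so that $\mu$ belongs to the class treated by Meyerovitch-Perl-Tointon-Yadin, every $\mu$-harmonic function on $\Gamma$ of polynomial growth of degree at most $d$ is, after restriction to the nilpotent finite-index subgroup $N$, an element of $\P^d(N)$, and conversely the $\mu$-harmonic elements of $\P^d(N)$ extend uniquely to such functions on $\Gamma$. The dimension count parallels the classical decomposition of $\P^d(\R^k)$ into harmonic polynomials and multiples of $|x|^2$: one shows that $\Delta_\mu$ maps $\P^d(N)$ onto $\P^{d-2}(N)$, with kernel precisely the $\mu$-harmonic polynomials of degree at most $d$. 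Hence
\begin{align*}
    \dim\H_A(X,\mu)=\dim\ker\bigl(\Delta_\mu|_{\P^d(N)}\bigr)=\dim\P^d(N)-\dim\P^{d-2}(N),
\end{align*}
which is manifestly independent of the measure $\mu$, and so the resulting formula for $\dim\H^d(M,L)$ is independent of $L$.

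The main obstacle is matching the class of LS-measures with the class of measures for which the MPTY results apply: they typically require symmetry or strong moment conditions, while LS-measures need not be symmetric. This is the source of the difficulty flagged in the excerpt in the cases where $\Gamma$ contains $\Z$ or $\Z^2$ as a subgroup of finite index. In those low-rank cases I would follow the hint of the text and bypass the group-side argument altogether: renormalize $L$ into a Schr\"odinger operator as in \cref{suselap} and invoke \cite[Theorem 5.3]{KP} directly on $M$. For $N=\Z^k$ one has $\dim\P^d(\Z^k)=\binom{k+d}{d}$ by \cref{exapol}, and the formula $\binom{k+d}{d}-\binom{k+d-2}{d-2}$ matches the classical dimension of harmonic polynomials of degree at most $d$ on $\R^k$, confirming that the two approaches yield the same answer and closing the gap.
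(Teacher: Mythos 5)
Your proposal is correct and follows essentially the same route as the paper: reduce to the fiber via \cref{corbg}, invoke the Meyerovitch--Perl--Tointon--Yadin (and Perl) dimension count $\dim\P^d(N)-\dim\P^{d-2}(N)$ in the transient case, and handle the recurrent case (where $\Gamma$ virtually contains $\Z$ or $\Z^2$ and the symmetry of the LS-measure is not available) by renormalizing to a Schr\"odinger operator and citing Kuchment--Pinchover. The only difference is that you sketch the surjectivity of $\Delta_\mu\colon\P^d(N)\to\P^{d-2}(N)$ as a heuristic, whereas the paper simply cites the relevant theorems of \cite{MPTY} and \cite{Pe} for this count.
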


\begin{exa}\label{exapol2}
In the situation of \cref{thmnil}, consider the case where $N\cong\Z^k$.
From \cref{exapol}, we get that
\begin{align*}
	\dim\H^d(M,L)
	&= \dim\P^d(\Z^k) - \dim \P^{d-2}(\Z^k) \\
	&= \textstyle{\binom{k+d}{k} - \binom{k+d-2}{k}}
	= \textstyle{\frac{k+2d-1}{k+d-1}\binom{k+d-1}{k-1}}.
\end{align*}
\end{exa}

\begin{exa}\label{exanopo}
If $M$ is simply connected and the sectional curvature of $M$ is non-positive,
then either $\Gamma$ contains a subgroup isomorphic to the free group $F_2$,
or else $M$ is isometric to Euclidean space $\R^m$,
where $m=\dim M$ \cite[Theorem A]{BE}.
In the first case, $\Gamma$ is non-amenable
and then $\H^0(M,L)$ is infinite dimensional,
therefore also all $\H^d(M,L)$ with $d\ge1$.
In the second case, $\Gamma$ contains $\Z^m$ as a subgroup of finite index,
and we are in the context of \cref{exapol2}.
\end{exa}

\begin{rem}
Extending and refining an earlier estimate of Hua and Jost \cite[Theorem 1.1]{HJ},
Meyerovich et al.\ \cite[Corollary 1.12]{MPTY} obtain that
\begin{align*}
	c_1d^r \le \dim\H^d(\Gamma,\mu) \le c_2d^r
\end{align*}
for all $d\ge1$, where $\mu$ is a \emph{courteous probability measure} on $\Gamma$ in the sense of \cite{MY},
$c_1<c_2$ are positive constants, and $r$ is the rank of the nilpotent subgroup $N\subseteq\Gamma$
of finite index.
Here we use \cite[Corollary 1.9]{MPTY} and \cite[Theorem 1.5]{Pe} to pass from finitely supported,
symmetric probability measures $\mu$ on $\Gamma$, whose support generates $\Gamma$,
as assumed in \cite[Corollary 1.12]{MPTY}, to the more general class of courteous probability measures.
This class includes the probability measures on $\Gamma$ induced from LS-measures as used here,
at least in the case where the LS-data are appropriately chosen
and $\Gamma$ does not contain $\Z$ or $\Z^2$ as a subgroup of finite index. 
\end{rem}

Finally, we have the following version of a result of Meyerovitch and Yadin \cite[Theorem 1.4]{MY}.

\begin{theo}\label{thmsol}
If $\Gamma$ is virtually solvable, then the following are equivalent:
\begin{enumerate}
\item
$\Gamma$ is virtually nilpotent;
\item
$\dim\H^d(M,L) < \infty$ for some $d\ge1$;
\item
$\dim\H^1(M,L) < \infty$.
\end{enumerate}
\end{theo}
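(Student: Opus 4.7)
The plan is to deduce the theorem from its group-theoretic counterpart via the reduction furnished by \cref{corbg}. The implication (1)$\Rightarrow$(3) follows at once from \cref{thmnil}: if $N\subseteq\Gamma$ is a nilpotent subgroup of finite index, then $\dim\H^1(M,L)=\dim\P^1(N)-\dim\P^{-1}(N)=\dim\P^1(N)$, and this is finite because $N$ is finitely generated. The implication (3)$\Rightarrow$(2) is trivial, taking $d=1$.

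The content of the theorem therefore lies in (2)$\Rightarrow$(1), which I would prove by contraposition. Assume that $\Gamma$ is virtually solvable but not virtually nilpotent; the goal is to show that $\dim\H^d(M,L)=\infty$ for every $d\ge 1$. By \cref{corbg}, applied to the growth type of $(r+1)^d$, it suffices to show $\dim\H_A(X,\mu)=\infty$ for the corresponding polynomial growth type $A$, where $X$ is a fiber of $p$ and $\mu$ is an LS-family with appropriately chosen LS-data. Since $\Gamma$ acts on $X$ and the LS-data can be chosen $\Gamma$-equivariantly, picking $x_0\in X$ and identifying $X$ with $\Gamma$ (after modding out by the finite stabilizer $\Gamma_{x_0}$) transforms $\mu$-harmonicity on $X$ into $\nu$-harmonicity on $\Gamma$ for a probability measure $\nu$ induced from $\mu_{x_0}$.

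Because $\Gamma$ is not virtually nilpotent, it cannot contain $\Z$ or $\Z^2$ as a finite-index subgroup, so by the remark preceding the theorem, $\nu$ is a courteous probability measure. Applying \cite[Theorem 1.4]{MY} to $(\Gamma,\nu)$ yields that, under the hypothesis that $\Gamma$ is virtually solvable but not virtually nilpotent, the space $\H^d(\Gamma,\nu)$ of $\nu$-harmonic functions of polynomial growth of degree at most $d$ is infinite-dimensional for every $d\ge 1$. Translating back via the identification $X\cong\Gamma/\Gamma_{x_0}$ and \cref{corbg} gives $\dim\H^d(M,L)=\infty$, completing the contraposition.

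The main obstacle is the final translation step: one needs to justify that the probability measure $\nu$ on $\Gamma$ arising from the LS-family $\mu$ satisfies the courteousness hypothesis required by \cite[Theorem 1.4]{MY}, and that the identification of $X$ with a coset space of $\Gamma$ sends $A$-bounded $\mu$-harmonic functions on $X$ bijectively onto polynomial-growth $\nu$-harmonic functions on $\Gamma$ (for the word metric, which is quasi-isometric to the distance on $X$ inherited from $M$ via $\Gamma$-equivariance and cocompactness). The exclusion of the virtually $\Z$ and virtually $\Z^2$ cases is automatic in the contrapositive, so no separate treatment via Kuchment--Pinchover is needed here, unlike in \cref{cornil,thmnil}.
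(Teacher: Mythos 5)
Your proposal is correct and follows essentially the same route as the paper: reduce to the group via \cref{corbg}, observe that the exclusion of virtually $\Z$ and $\Z^2$ (hence transience and the properties \ref{mu1}--\ref{mu3} of the induced measure) is automatic when $\Gamma$ is not virtually nilpotent, and invoke \cite[Theorem 1.4]{MY} together with \cref{thmnil}. The only cosmetic differences are that you phrase (2)$\Rightarrow$(1) as a contrapositive where the paper goes through (2)$\Rightarrow$(3)$\Rightarrow$(1) using the inclusion $\H^1(M,L)\subseteq\H^d(M,L)$ (which is also what upgrades the $d=1$ conclusion of \cite{MY} to all $d\ge1$), and that the paper chooses $x_0$ with trivial isotropy rather than passing to a coset space.
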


\begin{exa}\label{exasol}
If $\Gamma$ is linear,
then either $\Gamma$ contains a subgroup isomorphic to the free group $F_2$,
or else $\Gamma$ is virtually solvable, by the Tits alternative.
In the first case, $\H^0(M,L)$ is infinite dimensional, hence also $\H^1(M,L)$,
in the second, \cref{thmsol} applies.
Hence, if $\Gamma$ is linear,
the assertions of \cref{thmsol} hold without assuming that $\Gamma$ is virtually solvable.
\end{exa}

Since $\Gamma$ is finitely generated and the probability measure $\mu$ on $\Gamma$
induced from LS-measures as used here satisfy the properties required in \cite{MPTY,MY},
at least if the LS-data are chosen appropriately
and $\Gamma$ does not contain $\Z$ or $\Z^2$ as a subgroup of finite index (see \cref{court}),
$\H^d(M,L)$ is conjecturally finite dimensional for some (or any) $d\ge1$
if and only if $\Gamma$ is virtually nilpotent; compare with the introductions to \cite{MPTY,MY}.

\subsection{Laplace-type operators and renormalization}
\label{suselap}
With respect to the Riemannian metric associated to a diffusion operator $L$ on a manifold $M$,
we have $L=\Delta+Y$, where $Y$ is a smooth vector field on $M$,
and, conversely, any operator of that form is a diffusion operator.
More generally, if $M$ is Riemannian,
a differential operator $L$ on $M$ is said to be of \emph{Laplace-type} if it is of the form
\begin{align}
  L = \Delta + Y + V,
\end{align}
where $Y$ is a smooth vector field and $V$ a smooth function on $M$,
the \emph{drift vector field} and \emph{potential} of $L$.
In this notation,
$L$ is symmetric on $C^\infty_c(M)$ with respect to a smooth volume element $\vf^2{\rm dv}$,
where $\vf>0$, if and only if $Y=-2\grad\ln\vf$.
The orthogonal isomorphism
\begin{align*}
  m_{\vf} \colon L^2(M,\vf^2{\rm dv}) \rightarrow L^2(M,{\rm dv}), \quad m_{\vf} f = \vf f
\end{align*}
transforms $L$ then into the Schr\"odinger operator
\begin{align*}
	S = m_\vf \circ L \circ m_{\vf}^{-1} = \Delta+(V-\Delta\vf/\vf) f,
\end{align*}
which is symmetric on $C^\infty_c(M)$ with respect to ${\rm dv}$.
We refer to this transformation as \emph{renormalization (with $1/\vf$)}.

Using renormalization, the above results, properly formulated,
also hold for Laplace-type operators.
More precisely, 
for the Schr\"odinger operator $S$ as above,
we let $\psi$ be the lift of a positive eigenfunction $\psi_0$
of the corresponding Schr\"odinger operator $S_0$ on $M_0$ with respect
to the bottom $\lambda_0=\lambda_0(M_0,S_0)$ of the spectrum of $S_0$ on $M_0$.
(For analysis on orbifolds, see e.g.\,\cite{Fa}.)
Renormalizing a second time, now $S-\lambda_0$ with $\psi$,
yields the diffusion operator $L'=\Delta-2\grad\ln\psi$ on $M$,
which is symmetric with respect to the smooth volume element $\psi^2{\rm dv}$.
Thus multiplication with $\vf/\psi$ induces a bijection between the spaces of
$(L-\lambda_0)$-harmonic functions and $L'$-harmonic functions.
Since $\vf/\psi$ is bounded between two positive constants,
growth properties of functions are stable under multiplication with $\vf/\psi$.
In conclusion, if $a$ is of subexponential growth,
then multiplication with $\vf/\psi$ followed by restriction to $X$ yields isomorphisms
\begin{align*}
  \H_a(M,L-\lambda_0) \to  H_a(X,\mu)
  \quad\text{and}\quad
  \H_A(M,L-\lambda_0) \to  H_A(X,\mu),
\end{align*}
by what we just said and Theorems \ref{thmbg} and \ref{corbg}.
The results corresponding to the ones in  \cref{subap} are immediate consequences.
We note that here,
by the amenability of the group $\Gamma$ in Theorems \ref{cornil}, \ref{thmnil}, and \ref{thmsol},
$\lambda_0(M_0,S_0)=\lambda_0(M,S)$, the bottom of the spectrum of $S$ on $M$,
at least if the action of $\Gamma$ on $M$ is also free \cite{BMP,BC}.

\subsection{Structure of the article}
\label{susestruc}
In \cref{seclsd},
we present the Lyons-Sullivam discretization of the $L$-diffusion in the way we need it,
recall several results about it from the literature,
and prove that the LS-measures have finite exponential moments.
The third section constitutes the heart of the paper.
We show an extended version of \cref{thmbg}
in the case where $L$ is invariant under a group $\Gamma$
which acts properly discontinuously and cocompactly on $M$.
The extension to the orbifold case is contained in the fourth section.
In the short final section, we prove Theorems \ref{cornil}--\ref{thmsol}.

\tableofcontents

\section{Lyons-Sullivan discretization of diffusions}
\label{seclsd}
Following earlier work of Furstenberg,
Lyons and Sullivan (LS) constructed a discretization of Brownian motion on Riemannian manifolds \cite{LS}.
The LS-construction was taken up and refined in \cite{BL2}.
It actually applies also to diffusions associated to (elliptic) diffusion operators,
and that extension was described in \cite{BP}.
We start this section with an outline of the LS-construction for such diffusions.
The main new results are in \cref{susemom}.

Let $L$ be a diffusion operator on a connected manifold,
and assume that the $L$-diffusion on $M$,
that is, the diffusion with generator $L$, is complete.
Let $\Omega$ be the space of paths $\omega\colon[0,\infty)\to M$,
endowed with the compact-open topology.
For $x\in M$, denote by $P_x$ the probability measure on $\Omega$
corresponding to starting the $L$-diffusion at $x$.
For a measure $\mu$ on $M$, set $P_\mu=\int_M\mu(dx)P_x$.

\subsection{Balayage and $L$-harmonic functions}
\label{subset}
Let $F\subseteq M$ be closed and $V\subseteq M$ be open.
For $\omega\in\Omega$, the respective \emph{hitting} and \emph{exit time},
\begin{equation}\label{rfsv}
\begin{split}
  R^F(\omega) &= \inf \{ t\ge0 \mid \omega(t)\in F \}, \\
  S^V(\omega) &= \inf \{ t\ge0 \mid \omega(t) \in M\setminus V\},
\end{split}
\end{equation}
are stopping times.
For a measure $\mu$ on $M$ and a Borel subset $A\subseteq M$, let
\begin{equation}
\begin{split}
  \beta(\mu,F)(A) &= \beta_\mu^F(A) = P_\mu(\omega(R^F(\omega))\in A), \\
  \ve(\mu,V)(A) &= \ve_\mu^V(A) = P_\mu(\omega(S^V(\omega))\in A),
\end{split}
\end{equation}
where $\beta$ stands for \emph{balayage} and $\ve$ for \emph{exit}.
In the case of Dirac measures, $\mu=\delta_x$, we use the shorthand $x$ for $\delta_x$.
If $\beta(x,F)(F)=1$ for all $x\in M$, then $F$ is said to be \emph{recurrent}.
This is equivalent to $R^F<\infty$ almost surely with respect to each $P_x$.

\begin{prop}\label{swel}
Let $F$ be a recurrent closed subset of $M$, $\mu$ a finite measure on $M$, and $h\colon M\to\R$ an $L$-harmonic function.
Then we have:
\begin{enumerate}
\item\label{swelb}
If $h$ is bounded, then $\mu(h)=\beta(\mu,F)(h)$.
\item\label{swelp}
If $h$ is positive, then $\beta(\mu,F)(h)\le\mu(h)$.
\end{enumerate}
\end{prop}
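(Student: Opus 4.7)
The plan is to exploit the standard connection between $L$-harmonic functions and local martingales along the $L$-diffusion, then apply optional stopping at the (almost surely finite) hitting time $R^F$. First I would reduce to the case of Dirac measures: since $\beta(\mu,F)(A)=\int_M \mu(dx)\,\beta(x,F)(A)$ by definition of $P_\mu$, Fubini gives $\beta(\mu,F)(h)=\int_M \mu(dx)\,E_x[h(\omega(R^F))]$, so both statements follow once I prove $E_x[h(\omega(R^F))]=h(x)$ in case \eqref{swelb} and $E_x[h(\omega(R^F))]\le h(x)$ in case \eqref{swelp} for every $x\in M$, and then integrate against the finite measure $\mu$.

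Next I would set up the martingale. Since $L$ is an elliptic diffusion operator and $h$ is $L$-harmonic (smooth by hypoellipticity), the process $N_t:=h(\omega(t))$ is a continuous local martingale under each $P_x$, with localizing sequence given by exit times $\tau_n:=S^{V_n}$ from a relatively compact exhaustion $V_1\subset V_2\subset\cdots$ of $M$. On each $[0,\tau_n]$ the path stays in $\overline{V_n}$, where $h$ is bounded, so the stopped process $N_{\,\cdot\,\wedge\tau_n}$ is a bounded continuous martingale. Recurrence of $F$ means $R^F<\infty$ almost surely, so $\tau_n\wedge R^F\uparrow R^F$ as $n\to\infty$, and optional stopping at $\tau_n\wedge R^F$ gives
\begin{align*}
	E_x\!\left[h(\omega(\tau_n\wedge R^F))\right] = h(x).
\end{align*}

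For part \eqref{swelb}, assume $|h|\le C$. Since $\omega(\tau_n\wedge R^F)\to \omega(R^F)$ almost surely by path continuity and $R^F<\infty$ a.s., dominated convergence yields $E_x[h(\omega(R^F))]=h(x)$, which integrates to $\beta(\mu,F)(h)=\mu(h)$. For part \eqref{swelp}, $h\ge0$ makes $N$ a nonnegative continuous local martingale, which is automatically a supermartingale by Fatou's lemma; alternatively, apply Fatou directly to the identity above: since $h(\omega(\tau_n\wedge R^F))\to h(\omega(R^F))$ almost surely and these random variables are nonnegative,
\begin{align*}
	E_x[h(\omega(R^F))] \le \liminf_{n\to\infty} E_x[h(\omega(\tau_n\wedge R^F))] = h(x),
\end{align*}
and integration against $\mu$ gives $\beta(\mu,F)(h)\le\mu(h)$.

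The one technical point to verify carefully is that $N_t=h(\omega(t))$ really is a continuous local martingale with localizers $\tau_n$; this follows from Itô's formula applied to $h$ together with the fact that $Lh=0$ and that the martingale part is an honest martingale when stopped at $\tau_n$ because $h$ and its first derivatives are bounded on $\overline{V_n}$. Everything else is an application of optional stopping, dominated convergence, Fatou, and Fubini, with no further input needed beyond recurrence of $F$ and finiteness of $\mu$.
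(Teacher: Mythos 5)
Your argument is correct and is precisely the standard martingale/optional-stopping proof; the paper itself states \cref{swel} without proof, treating it as standard, so there is nothing to contrast it with. The only hypothesis you use implicitly that deserves explicit mention is the completeness of the $L$-diffusion (assumed at the start of \cref{seclsd}), which is what guarantees that the exit times $\tau_n$ from the relatively compact exhaustion tend to infinity almost surely, so that $\tau_n\wedge R^F\uparrow R^F$ and the limit passages by dominated convergence and Fatou go through.
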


An $L$-harmonic function $h$ on $M$ is said to be \emph{swept by $F$}
if $\beta(x,F)(h)=h(x)$ for all $x\in M$.
Then
\begin{align}\label{swee}
	\mu(h)=\beta(\mu,F)(h)
\end{align}
for all finite measures $\mu$ on $M$.
By \cref{swel}.\ref{swelb}, any bounded $L$-harmonic function is swept by any recurrent closed subset of $M$.

\subsection{LS-discretization and $L$-harmonic functions}
\label{sublsdh}
Let $X$ be a discrete subset of $M$.
Families $(F_x)_{x\in X}$ of compact subsets and $(V_x)_{x\in X}$ of relatively compact open subsets of $M$
together with a constant $C>1$  will be called \emph{regular Lyons-Sullivan data for $X$}
or, for short,  \emph{regular LS-data for $X$} if
\begin{enumerate}[label=(D\arabic*)]
\item\label{d1}
$x\in\mathring{F}_x$ and $F_x\subseteq V_x$ for all $x\in X$;
\item\label{d2}
$F_x\cap V_y=\emptyset$ for all $x\ne y$ in $X$;
\item\label{d3}
$F=\cup_{x\in X}F_x$ is closed and recurrent;
\item\label{d4}
for all $x\in X$ and $y\in F_x$,
\begin{equation*}
  \frac{1}{C} < \frac{d\ve(y,V_x)}{d\ve(x,V_x)} < C.
\end{equation*}
\end{enumerate}
We say that $X$ is \emph{$*$-recurrent} if it admits LS-data.
Our requirements  \ref{d1} and \ref{d2} are adopted from \cite{BL2,BP}
and more restrictive than the corresponding ones in \cite{LS}.

Suppose now that we are given regular LS-data as above.
For a finite measure $\mu$ on $M$, define measures
\begin{align}\label{lsmm}
	\mu' = \sum_{x\in X} \int_{F_x}\beta_\mu^F(dy)(\ve_y^{V_x}-\frac1C\ve_x^{V_x})
	\quad\text{and}\quad
	\mu'' = \frac1C\sum_{x\in X} \int_{F_x}\beta_\mu^F(dy)\delta_x
\end{align}
on $M$ with support on $\cup_{x\in X}\partial V_x$ and $X$, respectively.

\begin{prop}[Proposition 3.8 in \cite{BP}]\label{lsh1}
If $h$ is a positive $L$-harmonic function on $M$ swept by $F$ and $\mu$ is a finite measure on $M$, then
\begin{align*}\mu(h)=\mu'(h)+\mu''(h) \quad\text{and}\quad \mu'(h)\le(1-\frac1{C^2})\mu(h).\end{align*}
\end{prop}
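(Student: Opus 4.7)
The cornerstone of the argument is a local mean-value property: for any positive $L$-harmonic function $h$ and any $y \in V_x$, we have $h(y) = \ve_y^{V_x}(h)$. To establish this, I would exploit that $V_x$ is relatively compact, so that $h$ is bounded on $\overline{V_x}$; that ellipticity of $L$ makes $S^{V_x}<\infty$ $P_y$-almost surely; and that It\^o's formula, together with $Lh=0$, makes $h(\omega(t\wedge S^{V_x}))$ a bounded $P_y$-martingale. Optional stopping then yields the identity. In particular, $h(x)=\ve_x^{V_x}(h)$ and $h(y)=\ve_y^{V_x}(h)$ for every $y\in F_x\subseteq V_x$.

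Granting this, the identity $\mu(h)=\mu'(h)+\mu''(h)$ is a bookkeeping exercise. Decomposing $F=\bigsqcup_{x\in X}F_x$ (by \ref{d2}) and substituting the definition of $\mu',\mu''$ gives
\begin{align*}
\mu'(h)+\mu''(h)=\sum_{x\in X}\int_{F_x}\beta_\mu^F(dy)\Bigl[\ve_y^{V_x}(h)-\tfrac{1}{C}\ve_x^{V_x}(h)+\tfrac{1}{C}h(x)\Bigr].
\end{align*}
The two $1/C$-terms cancel by the mean-value property applied at $x$, and the remaining sum equals $\sum_{x\in X}\int_{F_x}\beta_\mu^F(dy)\,h(y)=\beta_\mu^F(h)$, which is $\mu(h)$ because $h$ is swept by $F$ (equation \eqref{swee}).

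For the estimate $\mu'(h)\le(1-1/C^2)\mu(h)$, I would rewrite it, via the identity just established, as the equivalent $\mu''(h)\ge(1/C^2)\mu(h)$. The upper Harnack bound in \ref{d4} reads $\ve_y^{V_x}\le C\ve_x^{V_x}$ as positive measures, so the mean-value property yields $h(y)\le Ch(x)$ for every $y\in F_x$. Consequently
\begin{align*}
\mu''(h)=\tfrac{1}{C}\sum_{x\in X}\int_{F_x}\beta_\mu^F(dy)\,h(x)\ge\tfrac{1}{C^2}\sum_{x\in X}\int_{F_x}\beta_\mu^F(dy)\,h(y)=\tfrac{1}{C^2}\beta_\mu^F(h)=\tfrac{1}{C^2}\mu(h),
\end{align*}
using sweeping once more.

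The only genuinely nontrivial step is the mean-value property itself: it requires $h$ along the diffusion stopped at $\partial V_x$ to be a true martingale rather than merely a local martingale or supermartingale. Relative compactness of $V_x$ from \ref{d1} is precisely what makes the stopped process bounded and hence upgrades local to true martingale; without it one would only obtain $h(y)\ge\ve_y^{V_x}(h)$ and the argument would collapse. Once that step is in hand, both conclusions reduce to straightforward manipulations with the sweeping identity and the two-sided bound \ref{d4}.
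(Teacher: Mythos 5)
Your argument is correct and is essentially the proof given for Proposition 3.8 in \cite{BP} (the present paper only cites that result and gives no proof of its own): the mean-value identity $h(y)=\ve_y^{V_x}(h)$ on the relatively compact $V_x$, the cancellation of the two $1/C$-terms, sweeping by $F$ together with the disjointness of the $F_x$ from \ref{d2}, and the Harnack bound \ref{d4} giving $h(y)\le Ch(x)$ on $F_x$. Your closing remark correctly isolates the one analytic point (upgrading the local martingale to a true one via boundedness of $h$ on $\overline{V_x}$ and a.s.\ finiteness of $S^{V_x}$); since $h>0$, Tonelli also justifies the interchange of sum and integral without further comment.
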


For $y\in M$, let now
\begin{equation}\label{mm2}
	\mu_{y,0}=
	\begin{cases}
	\delta_y &\text{if $y\notin X$,} \\
	\ve(y,V_y) &\text{if $y\in X$,} 
\end{cases}
\end{equation}
and set recursively, for $n\ge1$,
\begin{equation}\label{mmn}
  \mu_{y,n} = (\mu_{y,n-1})' \quad\text{and}\quad \tau_{y,n} = (\mu_{y,n-1})''.
\end{equation}
The associated \emph{LS-measure} is the probability measure
\begin{equation}\label{lsm}
  \mu_y = \sum_{n\ge1} \tau_{y,n}
\end{equation}
with support on $X$.

\begin{prop}[Proposition 3.12 in \cite{BP}]\label{lsm2}
For regular LS-data, the associated family $(\mu_y)_{y\in M}$ of LS-measures
has the following properties:
\begin{enumerate}
\item\label{lsm2a}
$\mu_y$ is a probability measure on $X$ such that $\mu_y(x)>0$ for all $x\in X$;
\item\label{lsm2b}
for any $x\in X$ and diffeomorphism $\gamma$ of $M$ leaving $L$, $X$, and the LS-data invariant,
\begin{equation*}
  \mu_{\gamma y}(\gamma x) = \mu_y(x);
\end{equation*}
\item\label{lsm2c}
for all $x\in X$,
\begin{equation*}
  \mu_x = \int_{\partial V_x} \ve_x^{V_x}(dy)\mu_y;
\end{equation*}
\item\label{lsm2d}
for all $x\in X$ and $y\in F_x$ different from $x$,
\begin{equation*}
  \mu_y =  \frac1C\delta_x + \int_{\partial V_x}\ve_x^{V_x}(dz) (\frac{d\ve(y,V_x)}{d\ve(x,V_x)}-\frac1C)\mu_z;
\end{equation*}
\item\label{lsm2e}
for any $y\in M\setminus F$ and stopping time $T\le R^F$,
\begin{equation*}
  \mu_y = \int \pi_y^T(dz)\mu_z,
\end{equation*}
where $\pi_y^T$ denotes the distribution of $P_y$ at time $T$.
\end{enumerate}
\end{prop}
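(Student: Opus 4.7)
The plan is to prove the five items in order. Items (a) and (b) rely on the mass-decrement estimate of \cref{lsh1}; items (c) and (d) exploit the linearity of the maps $\mu\mapsto\mu'$ and $\mu\mapsto\mu''$ in $\mu$; and item (e) combines this linearity with the strong Markov property of the $L$-diffusion.

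For (a), observe that $h\equiv 1$ is bounded and $L$-harmonic, hence swept by the recurrent closed set $F$ by \cref{swel}, so \cref{lsh1} gives $\mu_{y,n}(M) \le (1-1/C^2)^n\mu_{y,0}(M) = (1-1/C^2)^n$. The identity $\mu = \mu' + \mu''$ evaluated on $h\equiv 1$ then telescopes to $\mu_y(M) = \mu_{y,0}(M) - \lim_{n\to\infty}\mu_{y,n}(M) = 1$, so $\mu_y$ is a probability measure on~$X$. Strict positivity $\mu_y(\{x\}) > 0$ amounts to $\beta^F_{\mu_{y,n-1}}(F_x) > 0$ for some $n\ge 1$; this follows from connectedness of $M$ and the positivity of hit probabilities of the $L$-diffusion on the open set $\mathring F_x$ guaranteed by~\ref{d1}. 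For item (b), the invariance of $L$ and of the LS-data under $\gamma$ makes $\beta^F$ and each $\ve^{V_x}$ intertwine with $\gamma_*$, so the maps $\mu\mapsto\mu'$ and $\mu\mapsto\mu''$ do as well; an induction on $n$ gives $\tau_{\gamma y,n} = \gamma_*\tau_{y,n}$, and summing yields (b).

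Items (c) and (d) hinge on the linearity of the $'$ and $''$ operations. For (c), let $\tilde\mu_z$ denote the recursion started from $\delta_z$ for arbitrary $z\in M$; linearity together with $\mu_{x,0}=\ve_x^{V_x}$ gives $\mu_x = \int_{\partial V_x}\ve_x^{V_x}(dz)\,\tilde\mu_z$, and for $z\in\partial V_x\setminus X$ one has $\tilde\mu_z = \mu_z$ directly from \eqref{mm2}. For (d), a single step of the recursion applied to $\delta_y$ with $y\in F_x\setminus\{x\}$ uses $y\in F$ to get $\beta^F_{\delta_y} = \delta_y$, and then \ref{d2} to force only the $x$-summand to contribute; this produces $\tau_{y,1} = \frac{1}{C}\delta_x$ and $\mu_{y,1} = (d\ve(y,V_x)/d\ve(x,V_x) - 1/C)\,\ve_x^{V_x}$. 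Applying the recursion to $\mu_{y,1}$ and invoking (c) at each support point in $\partial V_x$ then delivers the stated formula.

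Finally, (e) follows from the strong Markov property. For $y\in M\setminus F$ and $T\le R^F$, the Markov identity $\beta^F_{\delta_y} = \int \pi_y^T(dz)\,\beta^F_{\delta_z}$ together with linearity gives $(\delta_y)' = \int \pi_y^T(dz)\,(\delta_z)'$ and the analogous relation for $''$; induction on $n$ yields $\mu_{y,n} = \int \pi_y^T(dz)\,\mu_{z,n}$ and $\tau_{y,n} = \int \pi_y^T(dz)\,\tau_{z,n}$ for every $n\ge 1$, and summing (with the geometric decay of (a) justifying the Fubini interchange) produces (e). The main technical obstacle throughout is the careful tracking of supports of the intermediate measures with respect to $X$ and $F$: because \eqref{mm2} treats $X$ asymmetrically, one must verify in each case that the integrating measures charge $X$ only on sets where $\tilde\mu_z$ can be replaced by $\mu_z$; the geometric decay from (a) and the absolute continuity built into \ref{d4} are what make this bookkeeping clean.
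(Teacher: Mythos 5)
The paper does not prove this proposition at all: it is quoted verbatim from \cite[Proposition 3.12]{BP}, so there is no in-text argument to compare yours against. On its own terms, your proof follows the standard route for this statement and is essentially sound: the telescoping of $\mu_{y,n-1}(M)=\mu_{y,n}(M)+\tau_{y,n}(M)$ against the geometric decay from \cref{lsh1} correctly gives total mass one; (b)--(e) all come down, as you say, to linearity of $\mu\mapsto\mu'$, $\mu\mapsto\mu''$ in $\mu$ plus the strong Markov property, and your bookkeeping remark at the end is exactly the right thing to worry about (note that it is automatic here: $\partial V_x\cap X=\emptyset$ because each $z\in X$ has the neighborhood $\mathring F_z$ disjoint from $V_x$, and $\pi_y^T$ for $T\le R^F$ never charges $X\subseteq\mathring F$). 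One small slip: in (d) you do not need item (c) at the support points of $\mu_{y,1}$ --- those points lie on $\partial V_x$, hence outside $X$, so the recursion started at $\delta_z$ \emph{is} $\mu_z$ by definition; (c) is the separate statement about starting points in $X$.

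The one genuine soft spot is the strict positivity $\mu_y(x)>0$. You reduce it to $\beta^F_{\mu_{y,n-1}}(F_x)>0$ for \emph{some} $n$ and then appeal to connectedness and positivity of hitting probabilities of the open set $\mathring F_x$. But the relevant event is that the \emph{first} hit of $F$ lands in $F_x$, and for a fixed $n$ (in particular $n=1$) this probability can genuinely vanish: the diffusion may almost surely be intercepted by other components of $F$ before reaching $F_x$ (think of $M=\R$ with $F$ a locally finite union of intervals; from a distant starting point the first hit is a.s. in one of the two adjacent components). What saves the statement is that the recursion keeps running: the measures $\mu_{y,n}$ propagate mass from $\partial V_{x'}$ to neighboring components, and the induced chain on $X$ is irreducible, so $\beta^F_{\mu_{y,n-1}}(F_x)>0$ for all sufficiently large $n$. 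That irreducibility (from ellipticity of $L$, the support theorem, and connectedness of $M$) is the missing step; as written, your one-line justification would be read as a claim about a single step, which is false in general.
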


\begin{cor}\label{lsm3}
Let $(\mu_y)_{y\in M}$ be the family of LS-measures associated to regular LS-data.
Assume in addition that the $F_x$, $x\in X$, are compact domains with smooth boundary,
and let $z\in X$.
Then the function $\mu(z)\colon M\to(0,1)$, $y\mapsto\mu_y(z)$, has the following properties:
\begin{enumerate}
\item\label{lsm3a}
For any $x\in X$, we have $\mu(z)=h_x$ on $F_x\setminus\{x\}$,
where $h_x$ is the $L$-harmonic function on $V_x$ given by
\begin{align*}
	h_x(y) = \ve_y^{V_x}(\mu(z)) + c_x
\end{align*}
with $c_x=(\delta_x(z)-\mu_x(z))/C$.
Moreover, $\mu(z)$ is discontinuous at any $x\in X$,
\begin{align*}
	\mu_x(z) = \ve_x^{V_x}(\mu(z)).
\end{align*}
\item\label{lsm3b}
The restriction of $\mu(z)$ to $M\setminus F$ is $L$-harmonic
and solves the Dirichlet problem $\mu(z)=h_x$ on $\partial F_x$, for all $x\in X$.
In particular, $\mu(z)$ is continuous on $M\setminus X$.
\end{enumerate}
\end{cor}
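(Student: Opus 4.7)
My plan is to read off both parts as pointwise consequences of the identities in \cref{lsm2}, applied to the function $\mu(z)\colon y\mapsto\mu_y(z)$.

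For part (\ref{lsm3a}), I fix $x\in X$ and $y\in F_x\setminus\{x\}$ and evaluate \cref{lsm2}.\ref{lsm2d} at the point $z$. Splitting the integrand, the contribution involving the Radon--Nikodym derivative collapses by definition to
\begin{align*}
\int_{\partial V_x}\mu_w(z)\,\ve_y^{V_x}(dw)=\ve_y^{V_x}(\mu(z)),
\end{align*}
while the remaining $-\frac1C$ term equals $-\frac1C\,\ve_x^{V_x}(\mu(z))=-\frac1C\mu_x(z)$ by \cref{lsm2}.\ref{lsm2c}. Collecting gives $\mu_y(z)=\ve_y^{V_x}(\mu(z))+(\delta_x(z)-\mu_x(z))/C$, which is the asserted formula $\mu(z)=h_x$ on $F_x\setminus\{x\}$. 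Since $y\mapsto\ve_y^{V_x}(g)$ is $L$-harmonic on $V_x$ for every bounded Borel $g$ on $\partial V_x$, so is $h_x$. The identity $\mu_x(z)=\ve_x^{V_x}(\mu(z))$ is \cref{lsm2}.\ref{lsm2c} evaluated at $z$; combined with $h_x(x)=\mu_x(z)+c_x$, it exhibits a jump of size $c_x\ne0$ at $x$, since $\mu_x(z)\in(0,1)$ (in particular $\mu_x(z)\ne\delta_x(z)$) by \cref{lsm2}.\ref{lsm2a}.

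For part (\ref{lsm3b}), I fix $y\in M\setminus F$ and, using that $F$ is closed, choose a relatively compact open neighborhood $U$ of $y$ with $\overline U\cap F=\emptyset$. Then $S^U\le R^F$ $P_y$-almost surely, so \cref{lsm2}.\ref{lsm2e} with $T=S^U$, evaluated at $z$, yields the mean-value identity
\begin{align*}
\mu_y(z)=\int_{\partial U}\mu_w(z)\,\ve_y^U(dw)=\ve_y^U(\mu(z)).
\end{align*}
Boundedness of $\mu(z)$ together with the ellipticity of $L$ and the standard characterization of $L$-harmonic functions via exit distributions from small relatively compact open sets then implies that $\mu(z)|_{M\setminus F}$ is $L$-harmonic. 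By part (\ref{lsm3a}), the trace of $\mu(z)$ on each $\partial F_x\subseteq F_x\setminus\{x\}$ is $h_x|_{\partial F_x}$, so $\mu(z)|_{M\setminus F}$ solves the Dirichlet problem with the stated data.

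Continuity on $M\setminus X$ then follows by pasting: $\mu(z)$ is continuous on $M\setminus F$ and on each $\mathring F_x\setminus\{x\}$, and the two local descriptions agree on $\partial F_x$. The step I would flag as the principal obstacle is the verification that the outside solution attains its boundary values continuously at the smooth hypersurface $\partial F_x$; this uses standard barrier/regularity theory for the Dirichlet problem for $L$ on a domain with smooth boundary and continuous data, together with the recurrence of $F$ from \ref{d3}, which pins $\mu(z)|_{M\setminus F}$ down as the unique bounded $L$-harmonic function with the prescribed boundary values on $\partial F$.
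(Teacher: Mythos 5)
Your proposal is correct and follows essentially the same route as the paper: part (\ref{lsm3a}) by translating \cref{lsm2}.\ref{lsm2c} and \cref{lsm2}.\ref{lsm2d} (including the observation that $c_x\ne0$ forces the jump at $x$), and part (\ref{lsm3b}) by applying \cref{lsm2}.\ref{lsm2e} with a suitable stopping time $T\le R^F$ and then invoking regularity of the smooth boundary $\partial F_x$ for the attainment of the Dirichlet data. The only cosmetic difference is that you use local exit times $S^U$ where the paper takes $T=R^F$ and $T=R^F\wedge S^{V_x}$, and you phrase the boundary-attainment step via barriers/uniqueness rather than the paper's equivalent probabilistic statement that the exit distribution from $V_x\setminus F_x$ tends to the Dirac measure at $y_\infty\in\partial F_x$.
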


\begin{proof}
\eqref{lsm3a} amounts to a translation of \cref{lsm2}.\ref{lsm2c} and \ref{lsm2}.\ref{lsm2d}.
The first claim of \eqref{lsm3b} follows from \cref{lsm2}.\ref{lsm2e} by choosing $T=R^F$.
As for the Dirichlet problem,
we may choose $T=R^F\wedge S^V$ on $V_x\setminus F_x$ in \cref{lsm2}.\ref{lsm2e}.
Since the boundary $\partial F_x$ of the domain $F_x$ is smooth,
the distribution of $P_y$ at time $T$ tends to the Dirac measure at $y_\infty\in\partial F_x$
as $y\in V_x\setminus F_x$ tends to $y_\infty$.
\end{proof}

The requirement on the smoothness of the $\partial F_x$ in \cref{lsm3} can be weakened.
We only use it to guarantee that the distribution of $P_y$ at the random time $T$ (as above)
tends to the Dirac measure at $y_\infty\in\partial F_x$
as $y\in V_x\setminus F_x$ tends to $y_\infty$.

\subsection{Exponential moments of LS-measures}
\label{susemom}
Although the following could be discussed in greater generality,
we now come back to one of the setups in the introduction
and let $L$ be a diffussion operator on a manifold $M$
which is invariat under a group $\Gamma$ acting properly discontinuously and cocompactly on $M$.
Clearly, the Riemannian metric assocated to $L$ is also invariant under $\Gamma$.
In particular, $M$ is complete with respect to the associated distance $d$.

For $x\in M$,
we denote by $B(x,r)$ and $\bar B(x,r)$ the open and closed ball of radius $r$ about $x\in M$
with respect to $d$
and call
\begin{align*}
	D_x = \{ y\in M \mid \text{$d(y,x) \le d(y,gx)$ for all $g\in\Gamma$}\}
\end{align*}
the \emph{Dirichlet domain of $x$ with respect to $\Gamma$.}

We choose an origin $x_0\in M$ and set $X=\Gamma x_0$ and $D_0=D_{x_0}$.
We let $V_0=V_{x_0}$ be a relatively compact and connected domain with smooth boundary
such that $V_0$ is invariant under the isotropy group $\Gamma_0$ of $x_0$
and such that, for some $\ve>0$,
\begin{align*}
	  B(x_0,\ve) \subseteq V_0
	  \quad\text{and}\quad
	  V_0 \cap B(x,\ve) = \emptyset
\end{align*}
for all $x\in X$ with $x\ne x_0$.
For convenience, we also require that
\begin{align*}
	B(x_0,\ve) \subseteq D_0
\end{align*}
and choose a $\Gamma_0$-invariant compact domain
\begin{align*}
	F_0 = F_{x_0} \subseteq B(x_0,\ve)
\end{align*}
with smooth boundary.
For each $x\in X$, we now set
\begin{align*}
	F_x = gF_0 \quad\text{and}\quad V_x = gV_0,
\end{align*}
where $x=gx_0$ with $g\in\Gamma$.
Since $F_0$ and $V_0$ are invariant under $\Gamma_0$,
$F_x$ and $V_x$ are well-defined.
By the choices of $F_0\subseteq V_0$ and $\ve>0$, we have
\begin{align*}
  F_x \subseteq V_x
  \quad\text{and}\quad
  F_x \cap V_y = \emptyset
\end{align*}
for all $x,y\in X$ with $x\ne y$.
Since the action of $\Gamma$ is properly discontinuous and cocompact,
the family of $(F_x,V_x)_{x\in X}$ are regular LS-data in the sense of \cref{sublsdh}.
We denote the corresponding Harnack constant of the pairs $(F_x,V_x)$ by $C$,
and let $(\mu_y)_{y\in M}$ be the family of LS-measures on $X$ associated to the data.
Since the data are invariant under $\Gamma$, we have
\begin{align*}
	\mu_{\gamma y}(\gamma x) = \mu_y(x)
\end{align*}
for all $\gamma\in\Gamma$, $x\in X$, and $y\in M$.

\begin{lem}\label{mubound}
There is a constant $C_D$ such that $\mu_y(z)\le C_D\mu_{x}(z)$ for any $x\in X$, $y\in D_x$, and $z\in X$.
\end{lem}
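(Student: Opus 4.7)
The plan is to use the $\Gamma$-equivariance of the LS-measures (\cref{lsm2}.\ref{lsm2b}) to reduce to finding a constant $C_D$ such that $\mu_y(z)\le C_D\mu_{x_0}(z)$ for all $y\in D_{x_0}$ and $z\in X$. Since $\Gamma$ acts cocompactly on $M$, the Dirichlet domain $D_{x_0}$ is compact and satisfies $X\cap D_{x_0}=\{x_0\}$. I would then split $D_{x_0}$ into the small set $F_{x_0}$, on which the explicit formulas of \cref{lsm2} yield the bound directly, and the compact complement $D_{x_0}\setminus\mathring{F}_{x_0}\subseteq M\setminus X$, on which a Harnack chaining argument for the function $\mu(z)\colon y\mapsto\mu_y(z)$ takes over.

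On $F_{x_0}$, combining \cref{lsm2}.\ref{lsm2d}, axiom \ref{d4} (which bounds the Radon-Nikodym derivative there by $C$), and the averaging identity of \cref{lsm2}.\ref{lsm2c} will give
\begin{align*}
\mu_y(z)\le \frac{\delta_{x_0}(z)}{C}+\Bigl(C-\frac{1}{C}\Bigr)\mu_{x_0}(z).
\end{align*}
For $z\ne x_0$ this is at most $C\mu_{x_0}(z)$; for $z=x_0$ it will be at most a fixed multiple of $\mu_{x_0}(x_0)$, using that $\mu_{x_0}(x_0)>0$ is a specific positive constant (\cref{lsm2}.\ref{lsm2a}). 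Hence $\mu_y(z)\le C_1\mu_{x_0}(z)$ on $F_{x_0}$ for some $C_1$ depending only on the LS-data.

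On the compact set $K:=D_{x_0}\setminus\mathring{F}_{x_0}$, the function $\mu(z)$ is positive and continuous by \cref{lsm3}.\ref{lsm3b}, $L$-harmonic on $M\setminus F$, and agrees with the positive $L$-harmonic function $h_x$ on each $F_x\setminus\{x\}$. I would apply the elliptic Harnack inequality in pieces—to $\mu(z)$ on small balls inside $M\setminus F$ and to $h_x$ on small balls inside $V_x$—and chain the resulting estimates together through the continuous matching across each interface $\partial F_x$. This should produce a constant $C_2$ independent of $z$ such that $\mu_y(z)\le C_2\mu_w(z)$ for every $y\in K$ and every $w\in\partial V_{x_0}$. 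Integrating over $w$ against the probability measure $\ve_{x_0}^{V_{x_0}}$ and applying \cref{lsm2}.\ref{lsm2c} then gives $\mu_y(z)\le C_2\mu_{x_0}(z)$, and $C_D:=\max(C_1,C_2)$ finishes the job.

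The main difficulty will be the Harnack chaining across the interfaces $\partial F_x$: $\mu(z)$ is merely continuous, not $L$-harmonic, in any neighborhood of these hypersurfaces. I would handle this by running the Harnack chain on each side of $\partial F_x$ separately and gluing via the common boundary values, obtaining a constant that depends only on the LS-data and the geometry of $K$, not on $z\in X$.
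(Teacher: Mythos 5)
Your reduction to $x=x_0$ by $\Gamma$-equivariance, your treatment of $F_{x_0}$ via \cref{lsm2}.\ref{lsm2d}, \ref{d4} and \cref{lsm2}.\ref{lsm2c}, and your final step of integrating a bound over $\partial V_{x_0}$ against $\ve_{x_0}^{V_{x_0}}$ all coincide with the paper's argument. The gap is exactly where you flag the ``main difficulty'': the collar between $\partial F_{x_0}$ and a definite distance away from $F_{x_0}$. An interior Harnack chain for $\mu(z)$ cannot reach $\partial F_{x_0}$ with a uniform constant, because the balls on which $\mu(z)$ is a positive $L$-harmonic function must shrink as their centers approach $\partial F_{x_0}$ (the function is only continuous, not $L$-harmonic, across that hypersurface), so the number of links, and hence the constant, blows up. ``Gluing via the common boundary values'' does not repair this: matching values transfers information \emph{at} $\partial F_{x_0}$, whereas you need to compare $\mu_y(z)$ at points $y$ \emph{near but outside} $F_{x_0}$ with $\mu_{x_0}(z)$ uniformly in $z\in X$; continuity alone only yields a $z$-dependent constant, which defeats the purpose of the lemma. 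Moreover the auxiliary function $h_{x_0}$ is of no help outside $F_{x_0}$: there it differs from $\mu(z)$, and since $c_{x_0}=-\mu_{x_0}(z)/C<0$ for $z\ne x_0$ it need not even be positive on $V_{x_0}\setminus F_{x_0}$, so the Harnack inequality does not apply to it.

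The paper closes this gap with a balayage (boundary-Harnack type) argument rather than chaining: it fixes a domain $U_{x_0}$ with $F_{x_0}\subseteq U_{x_0}\subseteq\bar U_{x_0}\subseteq B(x_0,\ve)$ and, for $y\in\bar U_{x_0}\setminus F_{x_0}$, writes $\mu_y(z)=\ve_y(\mu(z))$ with $\ve_y$ the exit measure from $V_{x_0}\setminus F_{x_0}$, using \cref{lsm3}.\ref{lsm3b}. The part of $\ve_y$ living on $\partial F_{x_0}$ is controlled by your first step, and the part on $\partial V_{x_0}$ by the Harnack inequality for harmonic measures, $\ve_y|_{\partial V_{x_0}}\le\ve_y^{V_{x_0}}\le C''\ve_{x_0}^{V_{x_0}}$, after which \cref{lsm2}.\ref{lsm2c} gives $\mu_y(z)\le C_1\mu_{x_0}(z)$ on all of $\bar U_{x_0}\setminus F_{x_0}$ with $C_1$ depending only on the data. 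Only on $D_{x_0}\setminus U_{x_0}$, which stays at a fixed positive distance $r$ from $F_{x_0}$, does the paper run a Cheng--Yau Harnack chain \cite{CY} for $\mu(z)$ with balls of fixed radius $r/2$ and $r$, terminating on $\partial U_{x_0}$ where the previous estimate applies. Replace your interface step by this exit-measure decomposition (or an equivalent boundary Harnack principle); the rest of your outline then goes through.
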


\begin{proof}
Let $C>1$ be the Harnack constant as in \ref{d4}.
From \cref{lsm2}.\ref{lsm2d}, we get that, for any $x\in X$ and $y \in F_x \setminus \{x\}$,
\begin{align*}
	\mu_{y}(z)
	&\le \frac{1}{C} \delta_x(z) + (C - \frac{1}{C}) \int_{\partial V_x} \ve_x^{V_x}(du) \mu_{u}(z)  \\
	&\le \frac{1}{C} \delta_x(z) + (C - \frac{1}{C}) \mu_x(z).
\end{align*}
Therefore $\mu_y(z)\le C\mu_x(z)$ for all $y\in F_x$ and $z\in X\setminus\{x\}$.
Since $\mu_x(x)>0$, there is also a constant $C'>0$ such that $1/C\le C'\mu_x(x)$,
and then $\mu_y(z) \le (C+C')\mu_x(z)$ for all $y\in F_x$ and $z\in X$.

Fix an open domain $U_x$ with smooth boundary such that
\begin{align*}
	F_x \subseteq U_x \subseteq \bar U_x \subseteq B(x,\ve),
\end{align*}
and let $C''>1$ be the Harnack constant for the pair $(\bar U_x , V_x)$ with respect to $L$.
For $y\in\bar U_x\setminus F_x$, denote by $\ve_y$ the exit measure from $V_x\setminus F_x$.
Then
\begin{align*}
	\ve_y|_{\partial V_x} \le \ve_y^{V_x} \le C''\ve_x^{V_x}.
\end{align*}
From \cref{lsm3}.\ref{lsm3b} and the first part of the proof, we get
\begin{align*}
	\mu_y(z) &= \ve_y(\mu(z)) \\
	&= \int_{\partial F_x} \ve_y(du)\mu_u(z) +  \int_{\partial V_x} \ve_y(du)\mu_u(z) \\
	&\le (C+C')\mu_x(z) \ve_y(\partial F_x) + C'' \int_{\partial V_x} \ve_x^{V_x}(du)\mu_u(z) \\
	&\le (C+C'+C'') \mu_x(z).
\end{align*}
Now there is a constant $r>0$ such that $d(y,F_x)\ge r$ for any $y\in\partial U_x$.
Hence we may apply the Harnack inequality of Cheng-Yau \cite[Theorem 6]{CY}
to $\mu(z)$ in pairs of balls of radius $r/2$ and $r$ along minimal paths
connecting a point $y\in D_x\setminus U_x$ to $\partial U_x$ consecutively
to arrive at the desired estimate for any given $x\in X$.
However, $\Gamma$-invariance implies that the same estimate holds for all $x\in X$.
\end{proof}

We let $X_0=\{x_0\}$ and $U_0$ be a relatively compact open, connected,
and $\Gamma_0$-invariant  neighborhood of $D_0$
such that $U_0\cap B(x,\ve)=\emptyset$ for all $x\in X$ with $x\ne x_0$.
For $x=gx_0\in X$, we let $U_x=gU_0$.
By recursion, we set
\begin{align}\label{xnun}
  X_n = \{x\in X \mid U_x \cap U_{n-1} \ne \emptyset\}
  \quad\text{and}\quad
  U_n = \cup_{x \in X_n} U_x.
\end{align}
Then
\begin{align}\label{un}
	U_0 \subseteq U_1 \subseteq U_2 \subseteq \cdots
\end{align}
is an exhaustion of $M$ by relatively compact open subsets such that
\begin{align}\label{un2}
	\bar U_n \subseteq U_{n+1}
	\quad\text{and}\quad
	\partial U_n \cap F = \emptyset
\end{align}
for all $n\ge0$.
Furthermore,
\begin{align}\label{un4}
	U_n \subseteq B(x_0,(n+1)\diam U_0)
\end{align}
for all $n\ge0$.
Finally, we fix a constant $0<c_0<1$ such that
\begin{align}\label{c0}
  \ve_{z}^{U_0\setminus F_0}(F_0) \ge c_0
  \quad\text{for any $y\in D_0$.}
\end{align}

\begin{lem}\label{subexp}
For any $n\ge0$ and $y\in D_0$,
we have $\ve_{y}^{U_n\setminus F}(\partial U_n)\le(1-c_0)^{n+1}$.
\end{lem}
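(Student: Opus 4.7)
The plan is to iterate the strong Markov property at the exit times from the nested open sets $U_0\setminus F\subseteq U_1\setminus F\subseteq\cdots\subseteq U_n\setminus F$, reducing everything to a single one-step estimate. Write $g(z)=\ve_z^{U_n\setminus F}(\partial U_n)$; the goal is to bound $g(y)$ for $y\in D_0$.

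The crucial one-step estimate is $\ve_y^{U_0\setminus F}(\partial U_0)\le 1-c_0$ for every $y\in D_0$, together with its extension
\[
  \ve_z^{U_k\setminus F}(\partial U_k) \le 1-c_0 \quad\text{for every $k\ge 1$ and every $z\in\partial U_{k-1}$.}
\]
The first bound is just \eqref{c0} combined with $U_0\cap F=F_0$. For the second, pick $x'\in X$ with $z\in D_{x'}$ (the Dirichlet domains cover $M$). Because $D_{x'}\subseteq U_{x'}$, $U_{x'}$ is open, and $z\in\bar U_{k-1}$, a neighborhood of $z$ inside $U_{x'}$ must meet $U_{k-1}$; hence $U_{x'}\cap U_{k-1}\ne\emptyset$, so $x'\in X_k$ and in particular $U_{x'}\subseteq U_k$. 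The $\Gamma$-equivariance of $L$, $X$, the LS-data, the Dirichlet domains, and the constant $c_0$ transports \eqref{c0} to $\ve_z^{U_{x'}\setminus F_{x'}}(F_{x'})\ge c_0$; combined with $F_{x'}\subseteq F$ and $U_{x'}\subseteq U_k$, this event is contained in the event of hitting $F$ before exiting $U_k$, proving the estimate.

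Granted this, the strong Markov property at the exit time from $U_0\setminus F$ of the process started at $y$ yields $g(y)=\int_{\partial U_0}\ve_y^{U_0\setminus F}(dz_0)\,g(z_0)$, since contributions from exits into $F$ vanish (such exits are already exits of $U_n\setminus F$ in $F$). Applying the first one-step estimate gives $g(y)\le(1-c_0)\sup_{z_0\in\partial U_0}g(z_0)$; iterating the same argument with $U_k\setminus F$ in place of $U_0\setminus F$ for $k=1,2,\ldots,n$, and using the second one-step estimate at each stage, yields $g(y)\le(1-c_0)^{n+1}$, since $g\le 1$. The only non-routine step---and the main obstacle---is the geometric observation that every $z\in\partial U_{k-1}$ lies in a Dirichlet cell $D_{x'}$ with $x'\in X_k$, which is where the recursive definition of $X_k$ plays its role.
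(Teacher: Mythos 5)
Your proof is correct and follows essentially the same route as the paper: the paper runs the identical argument as an induction on $n$, using the strong Markov property at the exit from $U_n\setminus F$ (exploiting $\partial U_n\cap F=\emptyset$) together with the one-step bound $\ve_z^{U_{n+1}\setminus F}(\partial U_{n+1})\le\ve_z^{U_u\setminus F}(\partial U_u)\le 1-c_0$ for $z\in D_u$, $u\in X_{n+1}$. Your unrolled iteration and your explicit justification that every $z\in\partial U_{k-1}$ lies in some $D_{x'}$ with $x'\in X_k$ are just elaborations of the steps the paper states more tersely.
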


\begin{proof}
By the definition of $c_0$, the assertion holds for $n=0$ (and any $y\in D_0$).
Assume now that it holds for some $n\ge0$.
Given $y\in D_0$,
the strong Markov property of the $L$-process together with $\partial U_n\cap F=\emptyset$ yields that
\begin{equation}\label{smp}
	\ve_{y}^{U_{n+1}\setminus F}(\partial U_{n+1})
	= \int_{\partial U_n} \ve_{y}^{U_n \setminus F}(dz) \ve_{z}^{U_{n+1} \setminus F}(\partial U_{n+1}).
\end{equation}
For any $z\in\partial U_n$,
there exists $u\in X_{n+1}$ such that $z\in D_u$, by \eqref{xnun}.
Clearly
\begin{align*}
	\ve_{z}^{U_{n+1}\setminus F}(\partial U_{n+1})
	\le \ve_{z}^{U_u\setminus F}(\partial U_u)
	\le 1 - c_0,
\end{align*}
where we use the $\Gamma$-equivariance of the data in the second step.
Therefore 
\begin{align*}
	\ve_{y}^{U_{n+1}\setminus F}(\partial U_{n+1})
	\le (1 - c_0) \ve_{y}^{U_n\setminus F}(\partial U_n) \leq (1 - c_0)^{n+2},
\end{align*}
by \eqref{smp}.
This completes the inductive step.
\end{proof}

\begin{thm}\label{exmom}
With LS-data as above,
the associated LS-measures have finite exponential moments.
More precisely,
\begin{align*}
\sum_{x\in X}\mu_y(x)e^{\alpha d(x,y)}<\infty
\end{align*}
for all $y\in M$ and $\alpha>0$ sufficiently small.
\end{thm}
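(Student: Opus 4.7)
The plan is to combine Lemma \ref{subexp} with the recursive definition of the LS-measures from \eqref{lsmm}--\eqref{lsm} to extract geometric decay in the iteration parameter $n$. The pivotal preliminary estimate is a uniform exponential moment for the $L$-diffusion's hitting distribution on $F$.

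For this preliminary step, I would set $H(y) := E_y\bigl[e^{\alpha d(y,\omega(R^F))}\bigr]$, which is $\Gamma$-invariant since both the $L$-diffusion and $d$ are. For $y \in D_0$, the event $\{d(y,\omega(R^F)) \ge t\}$ forces $\omega(R^F)$ to lie outside $B(x_0, t-\diam D_0)$, hence outside $U_n$ for $n \approx (t-\diam D_0)/\diam U_0$ by \eqref{un4}; this in turn requires the $L$-diffusion from $y$ to exit $U_n \setminus F$ through $\partial U_n$. Lemma \ref{subexp} then gives $P_y(d(y,\omega(R^F)) \ge t) \le C_1 (1-c_0)^{t/\diam U_0}$ uniformly for $y \in D_0$, and integrating yields $H \le H_0$ on $D_0$ provided $\alpha$ is below a positive threshold. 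The $\Gamma$-invariance then propagates this bound to all of $M$. Writing $\mu(e^{\alpha|\cdot|}) := \int e^{\alpha|u|}\,d\mu(u)$, the triangle inequality $|\omega(R^F)| \le |u| + d(u,\omega(R^F))$ yields $\beta_\mu^F(e^{\alpha|\cdot|}) \le H_0\,\mu(e^{\alpha|\cdot|})$ for every finite positive measure $\mu$ on $M$.

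Next I would feed this into the recursion $\mu_{y,n} = (\mu_{y,n-1})'$. Using that $|u| \le |x|+\diam V_0$ for $u \in \partial V_x$ and that $|x| \le |z|+\diam F_0$ for $z \in F_x$, one computes directly from \eqref{lsmm} that
\begin{align*}
	\mu_{y,n}(e^{\alpha|\cdot|}) \le \rho\cdot\mu_{y,n-1}(e^{\alpha|\cdot|}),
	\qquad \rho := \bigl(1-\tfrac{1}{C}\bigr)e^{\alpha(\diam V_0+\diam F_0)}H_0.
\end{align*}
Since $\rho = 1-1/C < 1$ at $\alpha = 0$, continuity gives $\rho < 1$ for all sufficiently small $\alpha > 0$. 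A parallel estimate yields $\tau_{y,n}(e^{\alpha|\cdot|}) \le \tfrac{H_0}{C}e^{\alpha\diam F_0}\rho^{n-1}\mu_{y,0}(e^{\alpha|\cdot|})$, and since $\mu_{y,0}$ is either $\delta_y$ or $\ve_y^{V_y}$, its exponential moment is finite. Summing the geometric series in $\mu_y = \sum_{n\ge1}\tau_{y,n}$ yields $\mu_y(e^{\alpha|\cdot|}) < \infty$, and the theorem follows from $e^{\alpha d(x,y)} \le e^{\alpha|x|}e^{\alpha|y|}$.

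The main obstacle is uniformity in the preliminary step: without a $y$-independent bound $H_0$, iterating the recursion would pick up a worsening constant at each round, which would destroy the geometric decay. The uniformity of Lemma \ref{subexp} over $D_0$, together with the $\Gamma$-invariance of $H$, is precisely what delivers the required uniform bound across all of $M$. Once this is in hand, the remaining computations are routine manipulations of \eqref{lsmm}--\eqref{lsm}.
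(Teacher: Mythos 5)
Your proof is correct and follows essentially the same route as the paper's: both hinge on \cref{subexp} to obtain a uniform exponential tail for the first hitting distribution of $F$, and both beat the per-step growth of the exponential moment with the contraction factor coming from the LS recursion ($1-1/C$ in your version, $\theta=1-1/C^2$ in the paper's) for $\alpha$ small. The only difference is bookkeeping: you iterate a scalar contraction on $\mu_{y,n}(e^{\alpha|\cdot|})$ directly, whereas the paper bounds $\mu_{y,n}\le\theta^n\pi_{\delta_y}^{S_n}$ as measures and then controls $E_y[e^{\alpha d(y_n(\omega),y)}]$ via the Markov property.
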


\begin{cor}\label{amom}
With LS-data as above,
the associated LS-measures have finite $a$-moments for any subexponential growth function $a$.
\end{cor}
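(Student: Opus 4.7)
The plan is to deduce \cref{amom} directly from \cref{exmom} by dominating a subexponential growth function by an exponential with arbitrarily small exponent. Concretely, given any $\alpha > 0$ that is small enough for \cref{exmom} to apply, I would like to find a constant $C_\alpha \ge 1$ such that $a(r) \le C_\alpha e^{\alpha r}$ for all $r \ge 0$; once this is in place, the corollary follows from
\begin{align*}
    \sum_{x \in X} \mu_y(x)\, a(d(x,y))
    \le C_\alpha \sum_{x \in X} \mu_y(x)\, e^{\alpha d(x,y)} < \infty.
\end{align*}

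To produce $C_\alpha$, I would invoke the definition of subexponentiality, namely $\lim_{r\to\infty} \frac{1}{r} \ln a(r) = 0$, which furnishes an $R > 0$ such that $\ln a(r) \le \alpha r$, equivalently $a(r) \le e^{\alpha r}$, for all $r \ge R$. For $r \in [0,R]$, monotonicity of $a$ gives $a(r) \le a(R)$, and since $e^{\alpha r} \ge 1$ on $[0,\infty)$, we obtain $a(r) \le a(R) e^{\alpha r}$ on $[0,R]$ as well. Setting $C_\alpha = \max\{1, a(R)\}$ then yields the required uniform bound on $[0,\infty)$.

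There is no genuine obstacle: the only thing to check is that the $\alpha$ chosen to bound $a$ can be taken as small as we like, which is exactly the content of subexponentiality, and that $\alpha$ can simultaneously be chosen small enough for \cref{exmom} to apply — but \cref{exmom} imposes no lower bound on $\alpha$, so this is automatic.
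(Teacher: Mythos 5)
Your proof is correct and is exactly the deduction the paper intends (it gives no explicit proof of the corollary): dominate the subexponential $a$ by $C_\alpha e^{\alpha r}$ for an $\alpha>0$ small enough that \cref{exmom} applies, using the limit defining subexponentiality for large $r$ and monotonicity of $a$ on the remaining compact interval. Nothing is missing.
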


\begin{rem}\label{rembg}
It is important in our arguments that we use a refined version of the LS-construction
which goes back to \cite{BL2} in the case of Brownian motion
and was discussed for diffusion operators in \cite{BP}.
The proof that the LS-measures in the original construction of Lyons and Sullivan
have exponential moments in the cocompact case in \cite[Lemma 3.13]{Ba}
does not apply immediately in the present situation.
We owe the main argument here to Fran\c{c}ois Ledrappier.
\end{rem}

Before starting with the proof of \cref{exmom}, we introduce some further notation.
For $\omega\in\Omega$, let
\begin{equation}\label{s0}
  S_0(\omega)= 
  \begin{cases}
  0 &\text{if $\omega(0)\notin X$}, \\
  S^{V_x}(\omega) &\text{if $\omega(0)=x\in X$,}
\end{cases}
\end{equation}
and recursively, for $n\ge1$,
\begin{equation}\label{rs}
\begin{split}
  R_n(\omega) &= \inf \{ t\ge S_{n-1}(\omega) \mid \omega(t)\in F \}, \\
  S_n(\omega) &= \inf \{ t\ge R_n(\omega) \mid \omega(t)\notin V_{x_n(\omega)} \},
\end{split}
\end{equation}
where $x_n=x_n(\omega)\in X$ with $y_n=y_n(\omega)=\omega(R_n(\omega))\in F_{x_n(\omega)}$.

\begin{proof}[Proof of \cref{exmom}]
We may assume that $y\in D_0$.
Using \cref{lsm2}.\ref{lsm2c}, we may also assume that $y\ne x_0$.
We now set $\mu_{y,0}=\delta_y$ and $\mu_{y,n}=\mu_{y,n-1}'$ as in \eqref{mm2} and \eqref{mmn}
and get
\begin{align*}
  \mu_{y,n} = \mu_{y,n-1}'
  &= \sum_{x\in X} \int_{F_x}\beta_{\mu_{y,n-1}}^F(du)(\ve_u^{V_x} - \frac1C\ve_x^{V_x}) \\
  &\le \theta \sum_{x\in X} \int_{F_x}\beta_{\mu_{y,n-1}}^F(du)\ve_u^{V_x}
  = \theta \pi_{\mu_{y,n-1}}^{S_1},
\end{align*}
where $\theta=1-C^{-2}$
and $\pi_\mu^S$ denotes the distribution of $P_\mu$ at the random time $S$.
Since $\mu_{y,n-1}=(\mu_{y,n-2})'$, we can proceed by recursion and get
\begin{align*}
  \mu_{y,n}
  \le \theta \pi_{\mu_{y,n-1}}^{S_1}
  \le \theta^2 \pi_{\mu_{y,n-2}}^{S_2}
  \le \dots \le \theta^n \pi_{\delta_y}^{S_n}.
\end{align*}
Now $R_n$ is the first time of hitting $F$ after $S_{n-1}$, and hence we also get
\begin{align*}
  \mu_{y,n}''(x) = \frac1C \mu_{y,n}(\partial V_x)
  \le \frac1C \theta^n \pi_{\delta_y}^{S_n}(\partial V_x)
  = \frac1C \theta^n P_y[R_n(\omega)\in F_x].
\end{align*}
Therefore
\begin{align}\label{est0}
\begin{split}
	\sum _{x\in X} \mu_y(x)e^{\alpha d(x,y)}
	&\le \frac1C \sum_{x\in X} \sum_{n\ge1} \theta ^n P_y[R_n(\omega )\in F_x] e^{\alpha d(x,y)} \\
	&= \frac1C \sum_{n\ge1}\theta ^n E_y[e^{\alpha d(x_n(\omega),y)}] \\
	&\le \frac{e^{\alpha\ve}}C \sum_{n\ge1}\theta ^n E_y[e^{\alpha d(y_n(\omega),y)}],
\end{split}
\end{align}
where we use that $y_n(\omega)\in F_{x_n(\omega)}\subseteq B(x_n(\omega),\ve)$.
By the Markov property of the $L$-process and the triangle inequality, 
\begin{align*}
	E_y[e^{\alpha d(y_n(\omega),y)}] \le \sup_{u\in D_0} \left( E_u[e^{\alpha d(y_1(\omega),u)}] \right)^n.
\end{align*}
By \eqref{un4} and \cref{subexp},
\begin{align}\label{est1}
	P_u[d(y_1(\omega),u) \ge n\diam U_0] \le \ve_u^{U_{n-2} \setminus F}(\partial U_{n-2}) \le (1-c_0)^{n-1}
\end{align}
for any $u\in D_0$.
Hence
\begin{align*}
	E_u[e^{\alpha d(y_1(\omega),u)}]
	= \int_\Omega P_u(d\omega) e^{\alpha d(y_1(\omega),u)}
	\le \sum_{n} (1-c_0)^{n-1}e^{\alpha(n+1)\diam U_0}.
\end{align*}
The sum on the right is finite for $(1-c_0)e^{\alpha\diam U_0}<1$,
hence the integral on the left is finite.
From \eqref{est1}, we also get that then
\begin{align}\label{est2}
\begin{split}
	\int_{d(y_1(\omega),u)>k\diam D_0} P_u(d\omega) e^{\alpha d(y_1(\omega),u)}
	&\le \sum_{n\ge k} (1-c_0)^{n-1}e^{\alpha(n+1)\diam U_0} \\
	&= (1 - c_0)^{-2} \frac{\{(1-c_0)e^{\alpha\diam U_0}\}^{k+1}}
	{1-(1-c_0)e^{\alpha\diam U_0}},
\end{split}
\end{align}
which tends to zero for $k\to\infty$ and uniformly for small $\alpha\ge0$.
The integral over the part of $\Omega$, where $d(y_1(\omega),u)\le k\diam U_0$,
is bounded by $e^{\alpha k\diam U_0}$,
which tends to $1$ as $\alpha\to0$.
Hence we may choose $\alpha>0$ and $k\ge1$ such that
\begin{align}\label{est4}
	1 <
	\left(1 + (1-c_0)^{k-2}\frac{(1-c_0)e^{\alpha\diam U_0}}
	{1-(1-c_0)e^{\alpha\diam U_0}} \right)e^{\alpha k\diam U_0}
	< \theta^{-1}
	= \frac{C^2}{C^2-1}.
\end{align}
Then $\sup_{u\in D_0} E_u[e^{\alpha d(y_n(\omega),u)}]<\theta^{-1}$,
and the right hand side of \eqref{est0} is finite.
\end{proof}

\subsection{Balanced LS-data}
\label{susebal}
In the situation considered in \cref{susemom},
we let $G_0(.,.)$ be the $L$-Green function of $V_0$.
Since $G(y,x_0)\to\infty$ as $y\to x_0$,
we can choose a constant $B$ such that $B$ is a regular value of $G(.,x_0)$ and let $F_0 = F_{x_0}$ be the connected component of
\begin{align*}
 \{G_0(.,x_0)\ge B\} \subseteq B(x_0,\ve)
\end{align*}
containing $x_0$. Since $V_0$ is invariant under $\Gamma_0$,
$G(.,x_0)$ is invariant under $\Gamma_0$ as well, hence also $F_0$.
Now we proceed as in \cref{susemom} to get regular LS-data $(F_x,V_x)$.
They are \emph{balanced} in the sense of \cite{BL2,BP}.
The following is \cite[Theorem 2.7]{BL2} in the case of Brownian motion.

\begin{thm}[Theorem 3.29 in \cite{BP}]\label{thmbal}
Let $(\mu_y)_{y\in M}$ be the family of LS-measures on $X$ associated to balanced LS-data as above.
Then the $F_x$, $x\in X$, are compact domains with smooth boundary, and we have:
\begin{enumerate}
\item\label{thmbal1}
The Green functions $G$ of $L$ on $M$ and $g$ of the random walk on $X$ asssociated to the family $(\mu_y)_{y\in M}$ of LS-measures satisfy
\begin{align*}
  G(y,x) = BC g(y,x) \quad\text{for all $x\in X$ and $y\in M \setminus V_x$.}
\end{align*}
\item\label{thmbal2}
The $L$-diffusion on $M$ is transient if and only if the random walk on $X$ asssociated to the family $(\mu_y)_{y\in M}$ of LS-measures is transient,
and then $\mu_y(x)=\mu_x(y)$ for all $x,y\in X$.
\end{enumerate}
\end{thm}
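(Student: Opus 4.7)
The smoothness of each $\partial F_x$ is automatic: $B$ is chosen as a regular value of $G_0(\cdot, x_0)$, so $F_0 \subseteq B(x_0, \ve)$ is a compact domain with smooth boundary, and $\Gamma$-equivariance transports the regularity to every $F_x = gF_0$.

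For assertion (1), my plan is to interpret both sides as expected excursion counts and match them. The balanced choice $F_x = \{G_{V_x}(\cdot, x) \ge B\}$ gives $G_{V_x}(u, x) = B$ on $\partial F_x$; together with the decomposition $G(u, x) = G_{V_x}(u, x) + \int \ve_u^{V_x}(dz)\, G(z, x)$ for $u \in V_x$, this means that each excursion of the $L$-diffusion through $V_x$ deposits expected $L$-Green mass exactly $B$ at $x$, independent of the entry point on $\partial F_x$. Applying \cref{swel}.\ref{swelp} to the positive $L$-harmonic function $G(\cdot, x)$ off $V_x$ and iterating via the strong Markov property yields, for $y \in M \setminus V_x$,
\[
	G(y, x) \;=\; B \cdot \mathbb{E}_y\!\left[\,\#\{\text{excursions of $\omega$ into $F_x$}\}\,\right].
\]
On the random-walk side, I would unravel $\mu_y = \sum_{n \ge 1} \tau_{y,n}$ with $\tau_{y,n} = (\mu_{y,n-1})''$: by \eqref{lsmm}, each $F_x$-visit of the LS excursion chain contributes a $\delta_x$-atom of mass $1/C$. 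Summing iteratively (using \cref{lsh1} at each step, much as in the proof of \cref{exmom}) gives
\[
	g(y, x) \;=\; \frac{1}{C} \cdot \mathbb{E}_y\!\left[\,\#\{\text{excursions into $F_x$}\}\,\right],
\]
and combining the two displays produces $G(y, x) = BC\, g(y, x)$.

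The hard part will be the bookkeeping in step (1): matching the nested LS recursions $\mu_{y,n+1} = \mu_{y,n}'$ and $\tau_{y,n+1} = \mu_{y,n}''$ to the successive $F_x$-excursions of the $L$-diffusion, so that the strong-Markov decomposition telescopes cleanly to the expected-excursion-count identities above. I would carry this out by induction on $n$, with \cref{lsh1} applied to $G(\cdot, x)$ supplying the step identity.

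For (2), transience equivalence is then immediate from (1), since the diagonal values $G(x, x)$ and $g(x, x)$ are finite iff the off-diagonal values in the identity of (1) are controlled, which they jointly are or are not. For the symmetry, condition \ref{d2} forces $y \notin V_x$ whenever $y \ne x$ in $X$, so (1) gives $g(y, x) = G(y, x)/(BC)$ for all distinct $x, y \in X$. Self-adjointness of $L$ makes $G$ symmetric, hence $g$ is symmetric on $X \times X$. In the transient case, $g$ is the inverse of $I - P$ where $P(y, x) = \mu_y(x)$, and symmetry of $g$ forces symmetry of $P$, i.e.\ $\mu_y(x) = \mu_x(y)$.
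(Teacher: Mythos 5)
The paper does not prove this statement; it is imported verbatim from \cite[Theorem 3.29]{BP} (and is \cite[Theorem 2.7]{BL2} for Brownian motion), so there is no internal proof to compare with. On its own terms, your outline for the smoothness claim and for part (1) is the standard Lyons--Sullivan/Ballmann--Ledrappier argument and is essentially right: the balanced choice gives $G_{V_x}(\cdot,x)\equiv B$ on $\partial F_x$, so each excursion ``hit $F_x$, then leave $V_x$'' deposits expected occupation density exactly $B$ at $x$, while by \eqref{lsmm} every $F$-visit of the LS chain is marked with probability exactly $1/C$ regardless of the entry point, so that marked visits to $F_x$ are precisely the visits of the random walk to $x$; both Green functions are then $B$, respectively $1/C$, times the same expected excursion count. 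Two points deserve explicit mention in the write-up: condition \ref{d2} is what makes the two excursion counts agree (the LS chain cannot touch $F_x$ while it sojourns in some $V_z$ with $z\ne x$, so its $F_x$-visits are exactly the excursions used to decompose $G(\cdot,x)$); and \cref{swel}.\ref{swelp} is not quite the right tool, since $G(\cdot,x)$ has a pole at $x$ and is not $L$-harmonic on $M$ --- what you actually need is the strong Markov property at the exit times $S_n$ together with $G(u,x)=G_{V_x}(u,x)+\ve_u^{V_x}(G(\cdot,x))$ for $u\in V_x$.

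The one genuine gap is the last step of (2). For infinite matrices, ``$g=(I-P)^{-1}$ symmetric implies $P$ symmetric'' is not a formal consequence: reassociating $(I-P)\,g\,(I-P^{\mathsf T})$ requires interchanging two infinite sums, and $I-P$ has entries of both signs. It can be repaired in the present setting: the first- and last-step decompositions give $Pg=gP=g-I$; transposing the first and using $g^{\mathsf T}=g$ yields $gP^{\mathsf T}=gP$, i.e.\ $g(P-P^{\mathsf T})=0$; now apply $I-P$ on the left and use $(I-P)g=I$, the interchange of sums being justified because $\sum_z g(x,z)\mu_y(z)\le\sup_z g(z,z)=g(x_0,x_0)<\infty$ by transience and $\Gamma$-invariance. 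Finally, for the transience equivalence you should invoke irreducibility (\cref{lsm2}.\ref{lsm2a}) so that finiteness of a single off-diagonal Green value characterizes transience on each side.
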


\subsection{Associated random walk on $\Gamma$}
\label{suseran}
In the situation considered in \cref{susemom} and \cref{susebal},
we may choose $x_0\in M$ with trivial isotropy group, $\Gamma_0=\{1\}$.
Then we may identify $\Gamma$ via the orbit map $g\mapsto gx_0$ with $X=\Gamma x_0$.
Under this identification, $\mu_{x_0}$ induces a probability measure $\mu$ on $\Gamma$
by $\mu(\gamma)=\mu_{x_0}(\gamma x_0)$.

\begin{prop}\label{court}
The probability measure $\mu$ has the following properties:
\begin{enumerate}
\item\label{courtsu}
$\mu(\gamma)>0$ for all $\gamma\in\Gamma$.
\item\label{courtsy}
If the $\mu$-random walk on $\Gamma$ is transient,
then $\mu(\gamma^{-1})=\mu(\gamma)$ for all $\gamma\in\Gamma$.
\item\label{courtem}
$\sum_{\gamma\in\Gamma}\mu(\gamma)e^{\alpha|\gamma|}<\infty$
for all sufficiently small $\alpha>0$.
\end{enumerate}
\end{prop}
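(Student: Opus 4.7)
The plan is to translate the three assertions, which are stated on the group $\Gamma$, into properties of the LS-measures on $X = \Gamma x_0$ via the orbit map, and then cite the already-established Propositions \ref{lsm2}, Theorem \ref{exmom} and Theorem \ref{thmbal}. Throughout we use that with $\Gamma_0 = \{1\}$ trivial, $\gamma \mapsto \gamma x_0$ is a bijection $\Gamma \to X$ and $\mu(\gamma) = \mu_{x_0}(\gamma x_0)$.

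For \eqref{courtsu}, this is immediate: \cref{lsm2}.\ref{lsm2a} asserts $\mu_y(x) > 0$ for every $y \in M$ and $x \in X$, which applied to $y = x_0$ and $x = \gamma x_0$ gives $\mu(\gamma) > 0$. For \eqref{courtsy}, the first step is to observe that the $\mu$-random walk on $\Gamma$ and the random walk on $X$ associated with $(\mu_y)_{y \in M}$ are conjugate under the orbit identification, and therefore have the same transience behaviour. By \cref{thmbal}.\ref{thmbal2}, the $L$-diffusion on $M$ is then transient as well and $\mu_y(x) = \mu_x(y)$ for all $x, y \in X$. Combining this with the $\Gamma$-equivariance from \cref{lsm2}.\ref{lsm2b} yields
\begin{align*}
\mu(\gamma^{-1})
= \mu_{x_0}(\gamma^{-1}x_0)
= \mu_{\gamma^{-1} x_0}(x_0)
= \mu_{x_0}(\gamma x_0)
= \mu(\gamma),
\end{align*}
where the second equality is the symmetry just obtained and the third is the $\Gamma$-invariance applied with $\gamma$.

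For \eqref{courtem}, the idea is to compare the word length $|\gamma|$ on $\Gamma$ (with respect to some, and hence any, finite symmetric generating set) with the distance $d(x_0, \gamma x_0)$ on $M$. Since $\Gamma$ acts properly discontinuously and cocompactly on the complete Riemannian manifold $M$, the Milnor-\v Svarc lemma gives a constant $K \ge 1$ with $|\gamma| \le K\,d(x_0,\gamma x_0) + K$ for all $\gamma \in \Gamma$. Hence for any $\alpha > 0$,
\begin{align*}
\sum_{\gamma \in \Gamma} \mu(\gamma)\,e^{\alpha|\gamma|}
\le e^{\alpha K} \sum_{x \in X} \mu_{x_0}(x)\,e^{\alpha K d(x,x_0)},
\end{align*}
and by \cref{exmom} applied with base point $y = x_0$, the right-hand side is finite whenever $\alpha K$ is below the threshold produced there. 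Choosing $\alpha > 0$ correspondingly small completes the proof.

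The only delicate point is the transience equivalence needed in \eqref{courtsy}: strictly speaking one must know that the random walk on $X$ with one-step distribution $\mu_{x_0}$ has the same transient/recurrent type as the $\mu$-walk on $\Gamma$, and that \cref{thmbal}.\ref{thmbal2} (which requires balanced LS-data) is indeed applicable under our choice of data from \cref{susebal}. Once both are in place, the argument is a purely formal translation between $\Gamma$ and $X$, and there is no further analytic content to prove.
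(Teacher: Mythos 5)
Your proposal is correct and follows essentially the same route as the paper: part (1) from \cref{lsm2}.\ref{lsm2a}, part (3) from \cref{exmom} (the paper treats the comparison of $|\gamma|$ with $d(x_0,\gamma x_0)$ as implicit in the identification of $\Gamma$ with $X$, which your Milnor--\v{S}varc remark just makes explicit), and part (2) by combining the symmetry $\mu_y(x)=\mu_x(y)$ from \cref{thmbal}.\ref{thmbal2} with the $\Gamma$-equivariance from \cref{lsm2}.\ref{lsm2b}, merely applied in the opposite order. Your attention to the conjugacy of the two random walks under the orbit map is a sound (if unstated in the paper) verification and introduces no new content.
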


\begin{proof}
\eqref{courtsu} follows immediately from \cref{lsm2}.\ref{lsm2a},
\eqref{courtem} from \cref{exmom}.
As for \eqref{courtsy}, the Lyons-Sullivan measures on $X=\Gamma x_0$ satisfy
\begin{align*}
  \mu_{x_0}(\gamma^{-1} x_0) = \mu_{\gamma x_0}(x_0) = \mu_{x_0}(\gamma x_0)
\end{align*}
for all $\gamma\in\Gamma$,
by \cref{lsm2}.\ref{lsm2b} and \cref{thmbal}.\ref{thmbal2}.
Now \eqref{courtsy} follows immediately from the definition of $\mu$.
\end{proof}

Note that $\mu$ satisfies the properties required in \cite{MPTY,MY}
in the case where the $\mu$-random walk is transient.

\section{Cocompact actions}
\label{secca}

The purpose of this section is to prove a general version of \cref{thmbg}
in the case where $L$ is invariant under a group $\Gamma$
which acts properly discontinuously and cocompactly on $M$.
We fix an origin $x_0\in M$ and let $X=\Gamma x_0$.
We also choose $\Gamma$-invariant regular LS-data as in \cref{seclsd}
and let $\mu=(\mu_y)_{y\in M}$ be the associated LS-measures on $X$.
Finally, we fix a growth function $a$.

\begin{thm}\label{thmba}
If $\mu$ has finite $a$-moments,
then the restriction of an $a$-bounded $L$-harmonic function on $M$ to $X$
is $a$-bounded and $\mu$-harmonic,
and the restriction map $\H_a(M,L)\to\H_a(X,\mu)$ is a $\Gamma$-equivariant isomorphism.
\end{thm}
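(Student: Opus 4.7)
The plan is to obtain the $\mu$-harmonicity identity $h(y)=\sum_{x}\mu_y(x)h(x)$ not just for $y\in X$ but for all $y\in M$, and then to construct an inverse via a Poisson-type extension. For the forward direction I would iterate the sweeping decomposition from \cref{swel,lsh1}: starting from $h(y)=\mu_{y,0}(h)$, which is the mean value property (reading $\ve_y^{V_y}(h)=h(y)$ when $y\in X$), I would show inductively that
\[
h(y) = \mu_{y,n}(h) + \sum_{k=1}^n \tau_{y,k}(h).
\]
Each sweeping step $\mu_{y,k-1}(h)=\mu_{y,k}(h)+\tau_{y,k}(h)$ is the strong Markov property applied at $R_k$ and $S_k$ together with the Harnack decomposition $\ve_y^{V_x}=\tfrac{1}{C}\ve_x^{V_x}+(\ve_y^{V_x}-\tfrac{1}{C}\ve_x^{V_x})$; the extension from the bounded or positive setting of \cref{swel,lsh1} to $a$-bounded $h$ is justified by the finite $a$-moment hypothesis, which supplies the integrability required for optional stopping.

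The crucial step is to show $\mu_{y,n}(h)\to 0$. From the proof of \cref{exmom}, $\mu_{y,n}\le\theta^n\pi_{\delta_y}^{S_n}$ with $\theta=1-C^{-2}<1$, so
\[
|\mu_{y,n}(h)| \le C_h\,\theta^n\,E_y\!\left[a(|\omega(S_n)|)\right].
\]
Since $\omega(S_n)\in V_{x_n}$ and the $V_x$ have uniformly bounded diameter, submultiplicativity lets one replace $|\omega(S_n)|$ by $|x_n|$ up to a constant, reducing matters to bounding $\theta^n E_y[a(|x_n|)]$. The finite $a$-moment of $\mu_y=\sum_{n\ge 1}\tau_{y,n}$ forces $\tau_{y,n}(a(|\cdot|))\to 0$, and the recursive relation between $\tau_{y,n}$ and $\mu_{y,n-1}$ then drives the required decay. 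Passing to $n\to\infty$ yields $h(y)=\mu_y(h)$ for every $y\in M$; specializing to $y\in X$ gives $\mu$-harmonicity of $h|_X$, and injectivity of restriction is automatic (if $h|_X=0$ then $h\equiv 0$).

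For surjectivity, given $f\in\H_a(X,\mu)$ I would define $Pf(y)=\sum_{x}\mu_y(x)f(x)$. The $a$-bound $|Pf(y)|\le C\,a(|y|)$ reduces, via $\Gamma$-invariance of $y\mapsto\sum_x\mu_y(x)a(d(x,y))$, to boundedness on the Dirichlet domain $D_0$, which follows from \cref{mubound} ($\mu_y\le C_D\mu_{x_0}$ on $D_0$), the finite $a$-moment of $\mu_{x_0}$, and submultiplicativity of $a$. The identity $Pf|_X=f$ is exactly $\mu$-harmonicity of $f$. For $L$-harmonicity of $Pf$, \cref{lsm3}.\ref{lsm3b} gives $L$-harmonicity on $M\setminus F$ by interchanging the sum with $L$, while on each $V_x$ the identity $Pf(y)=\ve_y^{V_x}(Pf)+(f(x)-\mu_x(f))/C$ on $F_x\setminus\{x\}$ (the second summand vanishing by $\mu$-harmonicity of $f$) shows that $Pf$ solves the Dirichlet problem on $V_x$ with boundary values $Pf|_{\partial V_x}$ and is therefore $L$-harmonic on $V_x$; patching yields $L$-harmonicity on all of $M$. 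Finally, $\Gamma$-equivariance of the restriction map is immediate from \cref{lsm2}.\ref{lsm2b}.

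The main obstacle is the first step: upgrading the sweeping identity and proving $\mu_{y,n}(h)\to 0$ for $a$-bounded rather than positive or bounded $h$. Both rely on translating the finite $a$-moment assumption into quantitative integrability estimates along the stopping times $R_k,S_k$, a more delicate accounting than the exponential-moment estimates of \cref{susemom}.
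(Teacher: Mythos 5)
Your architecture coincides with the paper's: the extension $Pf(y)=\mu_y(f)$ with $a$-boundedness via \cref{mubound} and $L$-harmonicity via \cref{lsm3} is exactly the paper's Lemmata \ref{uniform}--\ref{lemab}, and the forward direction via the recursion $h(y)=\mu_{y,n}(h)+\sum_{k\le n}\tau_{y,k}(h)$ with $\mu_{y,n}(h)\to0$ is the paper's Lemmata \ref{lemsweep}--\ref{lemmh}. However, at the step you yourself flag as the main obstacle there is a genuine gap, in two places. First, the identity $\mu_{y,k-1}(h)=\mu_{y,k}(h)+\tau_{y,k}(h)$ is \emph{equivalent} to $h$ being swept by $F$, i.e.\ to $h(y)=\beta_y^F(h)$, which is optional stopping at the unbounded stopping time $R^F$ for an unbounded function; saying that the finite $a$-moment hypothesis ``supplies the integrability required for optional stopping'' restates the problem rather than solving it, since the $a$-moments are moments of the measures $\mu_y$ on $X$, not a priori of the distribution of $\omega(R^F)$. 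The paper's \cref{lemsweep} closes this by exhausting $M$ with the sets $gU_n$, writing $f(y)$ as the hitting contribution inside $U_n$ plus the exit contribution through $\partial(gU_n)$, and killing the latter via the comparison $\ve_y^{gU_n\setminus F}(dz)\le C_0^{-1}\ve_y^{gU_n\setminus F}(dz)\,\ve_z^{U_x\setminus F_x}(F_x)$ on $\partial(gU_n)\cap D_x$, which bounds the boundary term by a constant times $\sum_{x\in g(X_{n+1}\setminus X_n)}\mu_y(x)a(|x|)$ -- a tail of the convergent $a$-moment series. Some such mechanism converting exit probabilities into $\mu_y$-masses is indispensable and is absent from your sketch.

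Second, your proposed bound $|\mu_{y,n}(h)|\le C_h\theta^n E_y[a(|x_n(\omega)|)]$ does not obviously close for a general growth function $a$: the argument of \cref{exmom} shows $\sup_u E_u[e^{\alpha d(y_1,u)}]<\theta^{-1}$ for the specific family $a(r)=e^{\alpha r}$ with $\alpha$ small, and only finiteness of the $a$-moment of $\mu_y$ is assumed here, which gives an upper bound on $\tau_{y,n}(a)$ in terms of $\theta^{n-1}E_y[a(|x_n|)]$ but not the reverse inequality you would need to ``drive the decay''. The paper instead proves the direct comparison $|\mu_{y,n}(f)|=|(\mu_{y,n-1})'(f)|\le(C^2+1)\,\tau_{y,n}(\vf)$ with $\vf(x)=C_fa(|x|+\diam F_{x_0})$, using that $f$ satisfies the mean value property on each relatively compact $V_x$ so that the two terms of $(\mu_{y,n-1})'$ can be evaluated pointwise; then $\sum_n\tau_{y,n}(\vf)=\mu_y(\vf)<\infty$ forces $\tau_{y,n}(\vf)\to0$. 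This one-line inequality is the missing ingredient; with it, your forward direction becomes the paper's proof. The extension direction and the equivariance and injectivity statements in your proposal are correct as written.
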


\begin{proof}
We begin by showing that $a$-bounded $\mu$-harmonic functions on $X$
extend to $a$-bounded $L$-harmonic functions on $M$.
To this end, we let $h\in\H_a(X,\mu)$ and define
\begin{align}\label{exten}
	f\colon M\to\R, \quad f(y) = \mu_{y}(h) = \sum_{x \in X} \mu_{y}(x)h(x).
\end{align}
First of all, we note that $f$ is well-defined since
\begin{align*}
  \sum_{x \in X} \mu_{y}(x) |h(x)|
  \le C_h \sum_{x \in X} \mu_{y}(x) a(|x|) < \infty.
\end{align*}

\begin{lem}\label{uniform}
With $X_n$ as in \eqref{xnun}, let
\begin{align*}
  f_n(y) = \sum_{x \in X_{n}} \mu_{y}(x)h(x).
\end{align*}
Then the sequence of functions $f_n$ converges locally uniformly to the function $f$.
\end{lem}

\begin{proof}
Let $C_D$ be the constant from \cref{mubound}.
Since $M$ is covered by the Dirichlet domains $D_x$, $x\in X$,
it suffices to consider the compact sets $D_x$, $x\in X$.
Let $\ve>0$, fix $x \in X$, and choose $n_0\in\N$ such that
\begin{align*}
	\sum_{u \in X\setminus X_n} \mu_{x}(u)a(|u|) < \ve/C_hC_D
\end{align*}
for all $n \geq n_{0}$.
Then we have, for any $y\in D_x$ and $n\ge n_0$,
\begin{align*}
  \left| f(y) - \sum_{u \in X_{n}} \mu_{y}(u) h(u) \right|
  &\le C_h \sum_{u \in X \setminus X_{n}} \mu_{y}(u) a(|u|) \\
  &\le C_hC_D \sum_{u \in X \setminus X_{n}} \mu_{x}(u) a(|u|)
  < \ve.
\end{align*}
This shows that the sequence of functions $f_n$ converges uniformly to $f$ on $D_x$ for any $x \in X$.
\end{proof}

\begin{lem}\label{lemlh}
The function $f$ is $L$-harmonic.
\end{lem}

\begin{proof}
By \cref{lsm3}.\ref{lsm3b},
the functions $\mu(u)$ are $L$-harmonic on $M\setminus F$.
Hence the functions $f_n$ as in \cref{uniform} are $L$-harmonic on $M\setminus F$.
Therefore the limit function $f$ is also $L$-harmonic on $M\setminus F$.

It suffices now to prove that $f$ is $L$-harmonic in $V_x$, for any $x \in X$.
Consider first a point $y \in F_x \setminus \{x\}$ and let $u\in X$.
Then we have that
\begin{align*}
	\mu_{y}(u)
	= \frac{1}{C} \delta_x(u) + \int_{\partial V_x} \ve_{y}^{V_x}(dz) \mu_{z}(u)  - \frac{1}{C} \mu_x(u).
\end{align*}
Therefore, by the uniform convergence $f_n\to f$ on $\bar V_x$,
\begin{align*}
  \int_{\partial V_x} \ve_{y}^{V_x}(dz) f(z) 
  &= \lim_{n\to\infty} \sum_{u \in X_{n}} \left( \int_{\partial V_x} \ve_{y}^{V_x}(dz) \mu_{z}(u)  \right) h(u) \\
  &= \lim_{n\to\infty} \sum_{u \in X_{n}} (\mu_{y}(u) - \frac{1}{C} \delta_x(u) + \frac{1}{C} \mu_x(u))h(u) \\
  &= f(y) - \frac{1}{C} h(x) + \frac{1}{C} \sum_{u \in X} \mu_x(u)h(u)
  = f(y),
\end{align*}
where we use that $h$ is $\mu$-harmonic. Similarly, it follows that
\begin{align*}
  \int_{\partial V_x} \ve_x^{V_x}(dz) f(z)  = f(x).
\end{align*}
Hence $f(y)=\ve_y^{V_x}(f)$ for all $y\in F_x$.

Let now $y\in V_x\setminus F_x$, and denote by $\ve_y$ the exit measure from $V_x\setminus F_x$.
Then
\begin{align*}
	\ve_y^{V_x} = \ve_y|_{\partial V_x} + \int_{\partial F_x} \ve_y(dz) \ve_z^{V_x},
\end{align*}
where the first term on the right corresponds to the paths which leave $V_x$ before entering $F_x$
and the second term to the paths which enter $F_x$ before leaving $V_x$.
By the uniform convergence $f_n\to f$ on $V_x$, $f$ is continuous on $V_x\setminus\{x\}$.
Moreover, by the first part of the proof, $f$ is $L$-harmonic on $V_x\setminus F_x$
and satisfies the mean value formula on $F_x$.
Hence
\begin{align*}
	f(y) = \ve_y(f)
	&= \int_{\partial V_x} \ve_y(dz) f(z) + \int_{\partial F_x} \ve_y(dz) f(z) \\
	&= \int_{\partial V_x} \ve_y(dz) f(z) + \int_{\partial F_x} \ve_y(dz) \ve_z^{V_x}(f)
	= \ve_y^{V_x}(f).
\end{align*}
We conclude that $f$ satisfies the mean value formula also on $V_x\setminus F_x$,
and hence $f$ is $L$-harmonic on $V_x$.
\end{proof}

\begin{lem}\label{lemab}
The function $f$ is $a$-bounded.
\end{lem}

\begin{proof}
Let $D$ be the Dirichlet domain of $x_0$ with respect to $\Gamma$.
Let $z\in M$ and write $z=gy$ with $g\in \Gamma$ and $y\in D$.
Using the triangle inequality and the monotonicity and sub\-multiplicativity of $a$, we get
\begin{align*}
	|f(z)| &\le \sum_{x \in X} \mu_{z}(x) |h(x)|
	\le C_h \sum_{x \in X} \mu_{z}(x) a(|x|) \\
	&= C_h \sum_{x \in X} \mu_{g^{-1}z}(g^{-1}x) a(|x|)
	= C_h \sum_{x \in X} \mu_y(x) a(|gx|) \\
	&\le C_h\sum_{x \in X} \mu_y(x)a(|gx_0|+|x|)
	\le C_aC_h a(|gx_0|)\sum_{x \in X} \mu_y(x)a(|x|) \\
	&\le C_aC_hC_D a(|gx_0|)\sum_{x \in X} \mu_{x_0}(x)a(|x|)
	= C_aC_hC_D c_0 a(|gx_0|) \\
	&\le \{C_a^2C_hC_Dc_0a(\diam D)\} a(|z|),
\end{align*}
where $C_a$ is the constant from \eqref{submul},
$C_D$ the constant from \cref{mubound},
and $c_0$ the $a$-moment of $\mu_{x_0}$.
\end{proof}

Lemmata \ref{lemlh} and \ref{lemab} show that the extension $f$ of an $a$-bounded $\mu$-harmonic function
$h$ on $X$ as in \eqref{exten} is an $a$-bounded $L$-harmonic function on $M$.
To finish the proof of \cref{thmba},
it remains to show that the restriction of any $a$-bounded $L$-harmonic function $f$ on $M$
to $X$ is $\mu$-harmonic.

\begin{lem}\label{lemsweep}
Any function $f\in\H_a(M,L)$ is swept by $F$, that is,
\begin{align*}
	f(y) = \beta_y^{F}(f) \quad\text{for any $y \in M$.}
\end{align*}
\end{lem}

\begin{proof}
The assertion is obvious for $y\in F$.
Let now $y\in M\setminus F$, and choose $g\in\Gamma$ with $y\in gD$, where $D=D_{x_0}$.

First of all, note that $\beta_y^{F}(f)$ is finite. Indeed, for $x \in X$ and $z \in \partial F_x$,
we have that $|z| \le |x| + \diam(F_{x_0})$.
Therefore
\begin{align*}
	|\beta_y^{F}(f)|
	&\le \sum_{x\in X} \int_{F_x} \beta_y^{F}(dz) |f(z)|
	\le C_f \sum_{x \in X} \int_{F_x} \beta_y^{F}(dz) a(|z|) \\
	&\le C_f C_a a(\diam(F_{x_0})) \sum_{x \in X} \beta_y^{F}(F_x) a(|x|) \\
	&\le C_f C_a C a(\diam(F_{x_0})) \sum_{x \in X} \mu_y(x) a(|x|) < \infty,
\end{align*}
where we used that $\beta_y^{F}(F_x)/C\le\tau_{y,1}(x)\le\sum_{n\ge1}\tau_{y,n}(x)=\mu_y(x)$.

Consider the exhausting sequence of $gU_n$ of $M$ by the relatively compact open subsets defined in \eqref{xnun}.
Since $f$ is harmonic, we have
\begin{equation}\label{terms}
	f(y)
	= \sum_{x\in gX_n} \int_{\partial F_x} \ve_y^{gU_n\setminus F}(dz) f(z) 
	+ \int_{\partial (gU_n)} \ve_y^{gU_n\setminus F}(dz) f(z).
\end{equation}
Observe that the last term converges to zero as $n \rightarrow \infty$.
Indeed, we have that
\begin{align*}
	\bigg| \int_{\partial (gU_n)} \ve_y^{gU_n \setminus F}(dz) f(z) \bigg|
	&\le \int_{\partial (gU_n)} \ve_y^{gU_n \setminus F}(dz) |f(z)| \\
	&\le C_f \int_{\partial (gU_n)} \ve_y^{gU_n \setminus F}(dz) a(|z|) \\
	&\le C_f \sum_{x\in g(X_{n+1}\setminus X_n)}
		\int_{\partial (gU_n)\cap D_x} \ve_y^{gU_n \setminus F}(dz) a(|z|) \\
	&\le C_fC_1 \sum_{x\in g(X_{n+1}\setminus X_n)}
		\int_{\partial (gU_n)\cap D_x} \ve_y^{gU_n \setminus F}(dz) a(|x|) \\
	&\hspace{-15mm}\le C_f\frac{C_1}{C_0} \sum_{x\in g(X_{n+1}\setminus X_n)}
		\int_{\partial (gU_n)\cap D_x} \ve_y^{gU_n \setminus F}(dz)
		\ve_z^{U_x\setminus F_x}(F_x) a(|x|) \\
	&\le C_f\frac{C_1}{C_0} \sum_{x\in g(X_{n+1}\setminus X_n)} \beta_y^{F}(F_x) a(|x|) \\
	&\le C_f\frac{C_1}{C_0}C \sum_{x\in g(X_{n+1}\setminus X_n)} \mu_y(x) a(|x|),
\end{align*} 
where $C_0$ is a constant satisfying $\ve_z^{U_x\setminus F_x}(F_x)\ge C_0$
for all $x\in X$ and $z\in D_x\setminus F_x$, $C_1=C_a a(\diam D_x)$,
and $C$ is the Harnack constant from \ref{d4} as used above.
Now the last term tends to $0$ as $n\to\infty$ since $\mu_y$ has finite $a$-moments.

It remains to prove that the first term in (\ref{terms}) converges to $\beta_y^{F}(f)$. Let $\ve > 0$ and note that there exists $n_0 \in \mathbb{N}$ such that
\begin{align*}
\sum_{x \in X \setminus g X_{n_0}} \int_{\partial F_x} \beta_y^{F}(dz) |f(z)|  < \ve/3.
\end{align*}
For any $n \in \mathbb{N}$ and $x \in g X_{n_0}$ we have that $\ve_y^{g U_n \setminus F}(A) \leq \beta_y^{F}(A)$ for any Borel subset $A$ of $\partial F_x$. This yields that $\beta_y^{F} - \ve_y^{g U_n \setminus F}$ is a measure on the Borel subsets of $\partial F_x$. Moreover, we have that $\ve_y^{g U_n \setminus F}(\partial F_x) \rightarrow \beta_y^{F}(\partial F)$, which implies that for any $x \in g X_{n_0}$ there exists $n_x \in \mathbb{N}$ such that
\begin{align*}
\beta_y^{F}(\partial F_x) - \ve_y^{g U_n \setminus F}(\partial F_x) < \frac{\ve}{3 C_f C_a a(\diam (F_{x_0})) | X_{n_0}| a(|x|) }
\end{align*}
for any $n \geq n_x$.

Then, for $n \geq \max\{n_0 ,\max_{x \in X_{n_0}} n_x\}$, we derive that
\allowdisplaybreaks[2]
\begin{align*}
	\bigg| \beta_y^{F}(f) &- \sum_{x \in gX_n} \int_{\partial F_x} \ve_y^{gU_n \setminus F}(dz)  f(z) \bigg| \\
	&\le \bigg| \sum_{x \in gX_{n_0}} \int_{\partial F_x} (\beta_y^{F} - \ve_y^{gU_n \setminus F})(dz)  f(z) \bigg|  \\
	&\hspace{44mm}+ \sum_{x \in X \setminus gX_{n_0}} \int_{\partial F_x} (\beta_y^{F} + \ve_y^{gU_n \setminus F})(dz) |f(z)| \\
	&\le  \sum_{x \in gX_{n_0}} \int_{\partial F_x} (\beta_y^{F} - \ve_y^{gU_n \setminus F})(dz)  |f(z)| 
	+ 2 \sum_{x \in X \setminus gX_{n_0}} \int_{\partial F_x} \beta_y^{F}(dz) |f(z)| \\
	&\le \sum_{x \in g X_{n_0}} (\beta_y^{F}(\partial F_x) - \ve_y^{g U_n \setminus F}(\partial F_x)) \sup_{z \in F_x} |f(z)| + 2\ve / 3  \le \ve,
\end{align*}
where we used that $\beta_y^{F}\ge\ve_y^{gU_n\setminus F}$ on the $\partial F_x$ with $x\in gX_{n_0}$.
\end{proof}

\begin{lem}\label{lemmh}
For any function $f\in\H_a(M,L)$, we have
\begin{align*}
	f(y) = \mu_{y}(f) \quad\text{for any $y \in M$.}
\end{align*}
\end{lem}

\begin{proof}
For a finite measure $\mu$ on $M$,
we define the measures $\mu'$ and $\mu''$ as in \eqref{lsmm}.
Since $f$ is swept by $F$,
\begin{align*}
	\mu(f) = \mu'(f) + \mu''(f).
\end{align*}
Observe that $|f(y)|\le C_f a(|x|+\diam(F_{x_{0}}))=:\vf(x)$ for any $x \in X$ and $y \in F_x$. Then we obtain that
\begin{align*}
	|\mu'(f)|
	&= \bigg| \sum_{x \in X} \int_{F_x} \beta_{\mu}^{F}(dy) (f(y) - \frac{1}{C} f(x)) \bigg| \\
	&\le \sum_{x \in X} \int_{F_x} \beta_{\mu}^{F}(dy) (|f(y)| + \frac{1}{C} |f(x)|) \\
	&\le (C + \frac{1}{C}) \sum_{x \in X} \beta_{\mu}^{F}(F_x) \vf(x)
	= (C^{2} + 1) \mu''(\vf).
\end{align*}
In the notation of \cref{sublsdh}, for any $y \in M$, we obtain that
\begin{align*}
	f(y) = \mu_{y,n}(f) + \sum_{1 \leq k \leq n} \tau_{y,k}(f)
	\text{ and }
	|\mu_{y,n}(f)| \leq (C^{2} + 1) \tau_{y,n}(\vf).
\end{align*}
Since the $a$-moments of the Lyons-Sullivan measures are finite,
we have
\begin{align*}
	\sum_{n\ge1} \tau_{y,n}(\vf) = \mu_{y}(\vf) < \infty,
\end{align*}
which yields that $\tau_{y,n}(\vf)\to0$ as $n\to\infty$,
as we wished.
\end{proof}

\cref{lemmh} shows that the restriction to $X$ of an $a$-bounded $L$-harmonic
function on $M$ is $\mu$-harmonic.
Thus the proof of \cref{thmba} is complete.
\end{proof}

\section{Cocompact coverings}
\label{seccov}

Let $q\colon\tilde M\to M$ be a covering of connected manifolds.
Let $\Omega_M$ and $\Omega_{\tilde M}$ be the spaces of continuous paths $\omega$
from $[0,\infty)$ to $M$ and $\tilde M$, respectively.
From the path lifting property of $q$, we obtain a map
\begin{align}\label{omega}
	H \colon \{(x,\omega)\in\tilde M\times\Omega_M\mid q(x)=\omega(0)\} \to \Omega_{\tilde M}, \quad
	H(x,\omega) = \omega_x,	
\end{align}
where $\omega_x$ denotes the continuous lift of $\omega$ to $\tilde M$ starting at $x$.
It is easy to see that $H$ is a homeomorphism with respect to the compact-open topology.
In what follows, we identify $\Omega_{\tilde M}$ according to \eqref{omega}.
With respect to this identification,
evaluation of $(x,\omega)\in\Omega_{\tilde M}$ at time $t\ge0$ is given by $\omega_x(t)$.

Let $L$ be a diffusion operator on $M$ and $\tilde L=q^*L$ be the pull-back of $L$ to $\tilde M$.
Assume that the $L$-diffusion on $M$ is complete,
and denote by $P_y$ the probability measure on $\Omega_M$
corresponding to starting the diffusion at $y\in M$.

\begin{thm}[Theorem 4.2 of \cite{BP}]\label{liftp}
For $y\in\tilde M$, define the probability measure $\tilde P_y$ on $\Omega_{\tilde M}$ by 
\begin{align*}
	\tilde P_y[A] = P_{q(y)}[\{\omega\mid(y,\omega)\in A\}],
	\quad A\in\mathcal{B}(\Omega_{\tilde M}).
\end{align*}
Then $\tilde P_x$ is the probability measure on $\Omega_{\tilde M}$
for the $\tilde L$-diffusion on $\tilde M$ starting at $x$.
\end{thm}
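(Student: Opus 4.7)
The plan is to verify that $\tilde P_y$ solves the martingale problem for $\tilde L$ on $\tilde M$ with initial condition $y$, since the diffusion generated by the (elliptic) operator $\tilde L$ on the complete manifold $\tilde M$ is uniquely characterized by this martingale problem. First I would dispatch the easy preliminaries: $\tilde P_y$ is a Borel probability measure on $\Omega_{\tilde M}$ because $H$ is a homeomorphism with respect to the compact-open topology; $\tilde P_y$-almost surely $\omega_y(0)=y$ by construction; and non-explosion of the $\tilde L$-diffusion follows from non-explosion of the $L$-diffusion on $M$ together with path lifting, since a continuous path of infinite lifetime in $M$ admits a continuous lift of infinite lifetime in $\tilde M$.

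The heart of the argument is a localization. Around any $x\in\tilde M$ pick an open neighborhood $U$ so small that $q|_U\colon U\to U':=q(U)$ is a diffeomorphism. Since $\tilde L=q^*L$, this diffeomorphism intertwines $\tilde L|_U$ with $L|_{U'}$. By uniqueness of path lifting over $U'$, the map $\omega\mapsto\omega_x=H(x,\omega)$ restricts to a bijection between continuous paths in $U'$ starting at $q(x)$ and continuous paths in $U$ starting at $x$, and it sends the exit time $S^{U'}$ on $\Omega_M$ to the exit time from $U$ on $\Omega_{\tilde M}$. It follows that the law under $\tilde P_x$ of the process stopped at exit from $U$ is the $(q|_U)^{-1}$-pushforward of the law under $P_{q(x)}$ of the process stopped at exit from $U'$.

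Given $f\in C^\infty_c(\tilde M)$, I would now apply Dynkin's formula for the $L$-diffusion on $M$ to the local representative $g:=f\circ(q|_U)^{-1}$ of $f$ on $U'$; together with the identity $\tilde Lf=(Lg)\circ q$ valid on $U$, this yields the martingale identity for the stopped process up to the exit time from $U$. Iterating this construction via the strong Markov property, and exhausting any bounded interval $[0,t]$ by consecutive exit times from such neighborhoods, upgrades the local identity to the full martingale property for the process $f(\omega_y(t))-f(y)-\int_0^t(\tilde Lf)(\omega_y(s))\,ds$ under $\tilde P_y$, for every $f\in C^\infty_c(\tilde M)$.

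The main obstacle I expect is the bookkeeping around filtrations and the strong Markov property: one has to check that the $\sigma$-algebra on $\Omega_{\tilde M}$ generated by evaluations up to time $t$ pulls back through $H$ to the corresponding $\sigma$-algebra on $\Omega_M$, and that the time-shift on $\Omega_{\tilde M}$ corresponds under $H$ to the time-shift on $\Omega_M$ composed with the change of initial point $y\mapsto\omega_y(t)$. Both are direct consequences of the bijectivity and continuity of path lifting, but it is precisely these compatibilities that allow the strong Markov property and the martingale identity to transfer cleanly from $M$ to $\tilde M$. Once they are in place, uniqueness of the solution of the martingale problem for $\tilde L$ on the complete manifold $\tilde M$ identifies $\tilde P_y$ as the law of the $\tilde L$-diffusion starting at $y$, completing the proof.
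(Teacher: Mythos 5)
The paper does not actually prove this statement: it is imported verbatim as Theorem 4.2 of \cite{BP}, so there is no in-text argument to compare yours against. Judged on its own, your proposal is a correct and essentially standard route: identify $\tilde P_y$ as a solution of the (local) martingale problem for $\tilde L$ by localizing over evenly covered neighborhoods, where $q$ is a diffeomorphism intertwining $\tilde L$ with $L$ and the lift map matches stopped paths, exit times, the canonical filtrations and the time shifts; then invoke well-posedness of the martingale problem for the elliptic operator $\tilde L$ with smooth coefficients. Two points deserve one more sentence each if you write this out. First, the iteration over consecutive exit times only exhausts $[0,t]$ if those exit times do not accumulate below $t$; this requires choosing the evenly covered neighborhoods with locally uniform size (e.g.\ each containing a metric ball whose radius is bounded below on compact sets), after which non-explosion of the $L$-path forces the exit times to tend to infinity. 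Second, your non-explosion claim is stated slightly out of logical order: what path lifting gives directly is that the candidate measure $\tilde P_y$ is carried by paths of infinite lifetime; completeness of the $\tilde L$-diffusion is then a \emph{consequence} of the identification, because uniqueness of the local martingale problem up to the explosion time identifies $\tilde P_y$ with the law of the $\tilde L$-diffusion stopped at explosion, and the full mass of $\tilde P_y$ on unexploded paths forces the explosion time to be almost surely infinite. With these two points made explicit, the argument is complete.
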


Let now $X\subseteq M$ be a $*$-recurrent discrete subset and $(F_x,V_x)_{x\in X}$ be regular LS-data for $X$
as in \cref{sublsdh} such that the $V_x$ are connected and evenly covered by $q$.
Let $\tilde X=q^{-1}(X)$.
For $x\in\tilde X$,
let $\tilde V_x$ be the connected component of $q^{-1}(V_{q(x)})$ containing $x$
and $\tilde F_x=\tilde V_x\cap q^{-1}(F_{q(x)})$.

\begin{lem}\label{covdat}
The discrete subset $\tilde X\subseteq\tilde M$ is $*$-recurrent
and the family $(\tilde F_x,\tilde V_x)_{x\in\tilde X}$ is regular LS-data for $\tilde X$.
\end{lem}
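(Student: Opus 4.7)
The plan is to verify each of the four conditions \ref{d1}--\ref{d4} for the lifted data, using throughout that $q$ restricts to a homeomorphism $\tilde V_x\to V_{q(x)}$ intertwining $\tilde L$ with $L$, together with the identification of the $\tilde L$-diffusion via lifts of paths (\cref{liftp}).

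Conditions \ref{d1} and \ref{d2} are essentially formal. For \ref{d1}, the local homeomorphism $q|_{\tilde V_x}\colon\tilde V_x\to V_{q(x)}$ sends the interior of $\tilde F_x$ to the interior of $F_{q(x)}$, so $x\in\mathring{\tilde F}_x$ and $\tilde F_x\subseteq\tilde V_x$ follow from the corresponding statements downstairs. For \ref{d2}, if $x\neq y$ in $\tilde X$ with $q(x)=q(y)$, then $\tilde V_x$ and $\tilde V_y$ are distinct connected components of $q^{-1}(V_{q(x)})$ and hence disjoint; if instead $q(x)\neq q(y)$, then $q(\tilde F_x)\subseteq F_{q(x)}$ and $q(\tilde V_y)\subseteq V_{q(y)}$, so $\tilde F_x\cap\tilde V_y=\emptyset$ is inherited from $F_{q(x)}\cap V_{q(y)}=\emptyset$.

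For \ref{d3}, note $\tilde F=q^{-1}(F)$ is closed by continuity of $q$. For recurrence, I would apply \cref{liftp}: for $y\in\tilde M$ and $(y,\omega)\in\Omega_{\tilde M}$, the lift $\omega_y$ meets $\tilde F$ exactly when $\omega=q\circ\omega_y$ meets $F$, so
\begin{align*}
    \tilde P_y[R^{\tilde F}<\infty]=P_{q(y)}[R^F<\infty]=1
\end{align*}
by recurrence of $F$. Hence $\tilde F$ is recurrent, and $\tilde X$ is $*$-recurrent.

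The main step is \ref{d4}. Here I would use that, for $y\in\tilde V_x$, the lifted path $\omega_y$ stays in $\tilde V_x$ for exactly as long as $\omega$ stays in $V_{q(x)}$, since $q|_{\tilde V_x}$ is a homeomorphism; consequently the exit time from $\tilde V_x$ starting at $y$ equals the exit time of $V_{q(x)}$ starting at $q(y)$, and the exit position on $\partial\tilde V_x$ is obtained from the exit position on $\partial V_{q(x)}$ by the unique lift that $q|_{\overline{\tilde V_x}}\colon\overline{\tilde V_x}\to\overline{V_{q(x)}}$ affords. Combining this with \cref{liftp} gives the pushforward identity
\begin{align*}
    \ve_y^{\tilde V_x} = (q|_{\overline{\tilde V_x}})^{*}\ve_{q(y)}^{V_{q(x)}},
\end{align*}
and the Radon--Nikodym derivative on $\partial\tilde V_x$ therefore satisfies
\begin{align*}
    \frac{d\ve(y,\tilde V_x)}{d\ve(x,\tilde V_x)}
    = \frac{d\ve(q(y),V_{q(x)})}{d\ve(q(x),V_{q(x)})}\circ q|_{\partial\tilde V_x}.
\end{align*}
The bound \ref{d4} for the original data thus transfers verbatim to the lifted data, with the same Harnack constant $C$. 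Putting \ref{d1}--\ref{d4} together completes the verification that $(\tilde F_x,\tilde V_x)_{x\in\tilde X}$ is regular LS-data for $\tilde X$.
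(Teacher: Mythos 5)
Your proposal is correct and follows the same route as the paper: conditions \ref{d1}, \ref{d2}, \ref{d4} are inherited through the evenly covered neighborhoods because $q$ is a covering with $\tilde L=q^*L$, closedness of $\tilde F=q^{-1}(F)$ is immediate, and recurrence of $\tilde F$ follows from \cref{liftp} together with the fact that a lifted path hits $q^{-1}(F)$ exactly when its projection hits $F$. The paper simply states the first three conditions as immediate consequences of the covering property and reserves the argument for recurrence, so your write-up just supplies details (the component-versus-fiber case split for \ref{d2}, the equality of exit times and the pushforward identity for exit measures for \ref{d4}) that the paper leaves implicit.
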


\begin{proof}
Since $q$ is a covering and $\tilde L=q^*L$,
the family of $\tilde F_x\subseteq\tilde V_x$ satisfies \ref{d1}, \ref{d2}, and \ref{d4},
where $C$ is the Harnack constant of $L$ from \ref{d4}.
Moreover, the union $\tilde F=\cup_{x\in\tilde  X}\tilde F_x=q^{-1}(F)$ is closed.
Finally,
by the correspondence between the $\tilde L$-diffusion starting at $y\in\tilde M$
and the $L$-diffusion starting at $q(y)\in M$ established in \cref{liftp}
and since the latter hits $F$ with probability one,
we conclude that the first hits $\tilde F$ with probability one.
Hence $\tilde F$ is recurrent.
\end{proof}

\begin{prop}\label{covmea}
The LS-measures $\mu$ and $\tilde\mu$ associated to the families $(F_x,V_x)_{x\in X}$
and $(\tilde F_y,\tilde V_y)_{y\in\tilde  X}$ as above satisfy
\begin{align*}
	\mu_{q(y)}(u) = \sum_{v\in q^{-1}(u)} \tilde\mu_y(v),
	\quad\text{for any $y\in\tilde M$ and $u\in X$.}
\end{align*}
\end{prop}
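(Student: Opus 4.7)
The plan is to prove the stronger identity $q_*\tilde\mu_y = \mu_{q(y)}$ as probability measures on $X$ (where $q_*$ denotes pushforward); evaluating at $\{u\}\subseteq X$ then yields the claim, since $q^{-1}(\{u\})\cap\tilde X=q^{-1}(u)$. The argument proceeds by pushing forward the entire recursive LS-construction \eqref{mm2}--\eqref{lsm} through $q$ and comparing term by term with the construction downstairs.

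The basic tool is a pair of equivariance lemmas for balayage and exit measures, both of which follow directly from \cref{liftp}: for $y\in\tilde M$ one has
\begin{align*}
q_*\tilde\beta^{\tilde F}_y = \beta^{F}_{q(y)}
\quad\text{and}\quad
q_*\tilde\ve^{\tilde V_{\tilde x}}_{\tilde y} = \ve^{V_{q(\tilde x)}}_{q(\tilde y)}\text{ for }\tilde y\in\tilde V_{\tilde x}.
\end{align*}
For the balayage identity, the hitting time of $\tilde F=q^{-1}(F)$ by a path in $\tilde M$ equals, under the bijection \eqref{omega}, the hitting time of $F$ by the projected path, and the hitting locations match under $q$. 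For the exit identity, one uses that $V_{q(\tilde x)}$ is evenly covered and $\tilde V_{\tilde x}$ is the sheet containing $\tilde x$, so $q\colon\overline{\tilde V_{\tilde x}}\to\overline{V_{q(\tilde x)}}$ is a homeomorphism identifying the corresponding exit processes. By linearity, both identities extend to finite measures: $q_*\tilde\beta^{\tilde F}_{\tilde\mu}=\beta^F_{q_*\tilde\mu}$ and likewise for exits.

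Using these, I verify that the pushforward commutes with the operations $\mu\mapsto\mu'$ and $\mu\mapsto\mu''$ of \eqref{lsmm}. Starting from
\begin{align*}
\tilde\mu' = \sum_{\tilde x\in\tilde X}\int_{\tilde F_{\tilde x}}\tilde\beta^{\tilde F}_{\tilde\mu}(dy)\bigl(\tilde\ve^{\tilde V_{\tilde x}}_y - \tfrac1C\tilde\ve^{\tilde V_{\tilde x}}_{\tilde x}\bigr),
\end{align*}
I apply $q_*$, invoke the exit identity inside the integrand, and then group the sum over $\tilde x\in\tilde X$ by fibers $\{\tilde x:q(\tilde x)=x\}$. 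Since $\bigsqcup_{\tilde x\in q^{-1}(x)}\tilde F_{\tilde x}=q^{-1}(F_x)$ and the integrand depends on $\tilde x$ only through $x=q(\tilde x)$, the balayage identity collapses the sum over the fiber into the integral $\int_{F_x}\beta^F_{q_*\tilde\mu}(dz)(\ve^{V_x}_z-\tfrac1C\ve^{V_x}_x)$. This gives $q_*(\tilde\mu')=(q_*\tilde\mu)'$, and the same argument yields $q_*(\tilde\mu'')=(q_*\tilde\mu)''$. A direct check of \eqref{mm2} shows $q_*\tilde\mu_{y,0}=\mu_{q(y),0}$ in both cases ($y\in\tilde X$ and $y\notin\tilde X$). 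An induction on $n$ via \eqref{mmn} then gives $q_*\tilde\mu_{y,n}=\mu_{q(y),n}$ and $q_*\tilde\tau_{y,n}=\tau_{q(y),n}$, and summing over $n\geq 1$ yields $q_*\tilde\mu_y=\mu_{q(y)}$.

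The main technical subtlety to handle carefully is the re-indexing step in the fiber sum: one needs to know that in passing from $\sum_{\tilde x\in\tilde X}\int_{\tilde F_{\tilde x}}\cdots$ to $\sum_{x\in X}\int_{F_x}\cdots$, the restriction of $\tilde\beta^{\tilde F}_{\tilde\mu}$ to $\bigsqcup_{\tilde x\in q^{-1}(x)}\tilde F_{\tilde x}$ pushes forward under $q$ to the restriction of $\beta^F_{q_*\tilde\mu}$ to $F_x$; this is precisely the content of the balayage identity applied to Borel subsets of $F_x$. Beyond this bookkeeping, the remainder of the argument is a routine monotone-convergence consolidation of the series $\sum_n\tau_{y,n}$.
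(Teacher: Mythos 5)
Your proposal is correct and follows essentially the same route as the paper: the paper also proves the stronger statement $q_*\tilde\mu_y=\mu_{q(y)}$ by showing that pushforward commutes with the operations $\mu\mapsto\mu'$ and $\mu\mapsto\mu''$ (via $q_*\beta^{\tilde F}_{\tilde\mu}=\beta^F_{q_*\tilde\mu}$, which rests on \cref{liftp} and $\tilde F=q^{-1}(F)$) and then running the recursion \eqref{mm2}--\eqref{mmn}. You merely spell out in more detail the exit-measure equivariance on evenly covered sheets and the fiberwise regrouping of the sum, which the paper leaves implicit.
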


\begin{proof}
Let $\mu$ and $\tilde\mu$ be finite measures on $M$ and $\tilde M$,
respectively.
Recall the splitting $\mu=\mu'+\mu''$ from \eqref{lsmm},
and suppose that $q_*\tilde\mu=\mu$.
Then, by \cref{liftp} and since $\tilde F=q^{-1}(F)$,
we conclude that $q_*\beta_{\tilde\mu}^{\tilde F} = \beta_\mu^F$.
We get, therefore, that
\begin{align*}
	q_*\tilde\mu' = \mu'
	\quad\text{and}\quad
	q_*\tilde\mu'' = \mu''.
\end{align*}
It follows that, in the recursive construction in \eqref{mm2} and \eqref{mmn},
applied to $y\in\tilde M$ and $q(y)\in M$, respectively,
we have
\begin{align*}
	q_*\tilde\mu_{y,n} = \mu_{q(y),n}
	\quad\text{and}\quad
	q_*\tilde\tau_{y,n} = \tau_{q(y),n}
\end{align*}
for all $n\ge0$.
We conclude that $q_*\tilde\mu_y=\mu_{q(y)}$, which is the assertion.
\end{proof}

Note that \cref{covmea} is a discrete version of the corresponding formula for the transition densities
of the diffusions on $M$ and $\tilde M$ as in \cite[Corollary 4.3]{BP}.

Assume now that we are in the situation of the introduction with an orbifold covering $p\colon M\to M_0$,
where $M$ is the given manifold with the trivial orbifold structure and $M_0$ is a closed orbifold.
Assume furthermore that $L$ and the volume element on $M$ are pull-backs of a diffusion operator
and a smooth volume element on $M_0$.

Then the universal covering $q\colon\tilde M\to M$ composed with $p$ is the universal orbifold covering of $M_0$.
Moreover, there is a group $\Gamma$,
which acts properly discontinuously on $\tilde M$ such that $M_0=\Gamma\backslash\tilde M$.
More generally, let $q\colon\tilde M\to M$ be any covering such that $M_0=\Gamma\backslash\tilde M$,
where $\tilde M$ is connected
and $\Gamma$ is a group which acts properly discontinuously on $\tilde M$.
Since $M_0$ is compact, any such group is finitely generated.

Choose $x_0\in M_0$ and let $X=p^{-1}(x_0)$.
Let $V_0$ be a connected open subset of $M_0$ which is evenly covered (in the sense of orbifolds)
by $\tilde p=p\circ q$, therefore also by $p$.
Let $F_0\subseteq V_0$ be a compact neighborhood of $x_0$ with smooth boundary.
For $x\in X$, let $V_x$ be the connected component of $p^{-1}(V_0)$ containing $x$
and set $F_x=V_x\cap p^{-1}(F_0)$.

\begin{lem}
The discrete subset $X\subseteq M$ is $*$-recurrent
and the family $(F_x,V_x)_{x\in X}$ is regular LS-data.
\end{lem}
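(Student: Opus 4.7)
The plan is to verify the four defining conditions (D1)--(D4) of regular LS-data for $(F_x,V_x)_{x\in X}$; once these are established, the $*$-recurrence of $X$ is immediate by definition. The argument runs in close parallel with the proof of \cref{covdat}, with the orbifold cover $p$ here replacing the manifold cover $q$ there.

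The first step is to observe that, because $V_0$ is evenly covered by $p$ in the sense of orbifolds and $M$ carries the trivial orbifold structure, each connected component of $p^{-1}(V_0)$ contains exactly one preimage of $x_0$: locally around $x_0$ the orbifold chart is $\tilde V_0/\Gamma_0$ with $\Gamma_0$ the isotropy of $x_0$ in $M_0$, and each sheet of $p$ above $V_0$ is a manifold copy of $\tilde V_0$ with unique $\Gamma_0$-fixed point lying over $x_0$. From this, (D1) and (D2) follow at once: $F_x\subseteq V_x$ is tautological, $x\in\mathring F_x$ follows from $x_0\in\mathring F_0$ and the continuity of $p$, and for distinct $x,y\in X$ the sets $V_x$ and $V_y$ are distinct components of $p^{-1}(V_0)$, hence disjoint.

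For (D3), closedness of $F=p^{-1}(F_0)$ is clear. Recurrence of $F$ for the $L$-diffusion on $M$ reduces, via the orbifold analog of \cref{liftp} (i.e., the $L$-diffusion on $M$ projects under $p$ to the $L_0$-diffusion on $M_0$), to recurrence of $F_0$ for the $L_0$-diffusion on $M_0$; the latter holds because $M_0$ is a compact orbifold and $F_0$ has non-empty interior. Cleaner still, one may lift via $q$ to $\tilde M$, on which \cref{liftp} applies verbatim: since $\Gamma\backslash\tilde M=M_0$ is compact, the $\tilde L$-diffusion on $\tilde M$ almost surely hits the $\Gamma$-invariant closed set $(\tilde p)^{-1}(F_0)$, which projects to $F$ under $q$.

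For (D4), the standard interior Harnack inequality for the elliptic operator $L$, applied to the Poisson kernel $y\mapsto d\ve(y,V_x)/d\ve(x,V_x)$, which is a positive $L$-harmonic function on $V_x$, furnishes a finite Harnack constant $C_x$ on each smooth pair $F_x\Subset V_x$. Uniformity of $C_x$ across $x\in X$ is the main delicate point; I would handle it by lifting via $q$ to $\tilde M$. There the family $(\tilde F_{\tilde x},\tilde V_{\tilde x})_{\tilde x\in\tilde X}$ of sheets of $(\tilde p)^{-1}(V_0)$ and $(\tilde p)^{-1}(F_0)$ is $\Gamma$-invariant, since $\tilde L=\tilde p^*L_0$ is $\Gamma$-invariant and $\Gamma$ acts transitively on $\tilde X=(\tilde p)^{-1}(x_0)$ permuting sheets accordingly; $\Gamma$-equivariance therefore forces a single Harnack constant for all $\tilde x\in\tilde X$. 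For $V_0$ chosen small enough, $q$ restricts to a diffeomorphism of each $\tilde V_{\tilde x}$ onto the corresponding $V_{q(\tilde x)}$ (both being $|\Gamma_0|$-fold manifold covers of $V_0$), so this common Harnack constant descends verbatim to $M$. The main obstacle throughout is handling the orbifold singularities of $M_0$ at $x_0$, which the lifting strategy via $q$ to the manifold $\tilde M$ (where $\Gamma$ acts properly discontinuously and cocompactly) neatly bypasses.
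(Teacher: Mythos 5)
Your proof is correct and takes essentially the same route as the paper's (much terser) argument: verify (D1)--(D4) directly, with the single Harnack constant coming from the orbifold pair $(F_0,V_0)$, and deduce recurrence of $F=p^{-1}(F_0)$ from compactness of $M_0$. Your detour through $\tilde M$ to get uniformity of the Harnack constant is sound but not really needed, since every $(F_x,V_x)$ is an isomorphic copy of the chart over $(F_0,V_0)$ carrying the pulled-back operator.
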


\begin{proof}
Clearly, the family of $F_x\subseteq V_x$ satisfies \ref{d1}, \ref{d2}, and \ref{d4},
where $C$ is the Harnack constant of $L_0$ for $(F_0,V_0)$ (in the sense of orbifolds).
Moreover, the union $F=p^{-1}(F_0)$ is closed.
Since $M_0$ is compact, $F$ is recurrent.
\end{proof}

\begin{thm}\label{covres}
Assume that $\tilde\mu$ has finite $a$-moments.
Then $\mu$ has finite $a$-moments,
the restriction of an $a$-bounded $L$-harmonic function to $X$
is an $a$-bounded $\mu$-harmonic function on $X$,
and the restriction map $\H_a(M,L)\to\H_a(X,\mu)$ is an isomorphism.
\end{thm}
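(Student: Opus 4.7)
The plan is to lift the problem from $M$ to the cocompact $\Gamma$-covering $\tilde M$, apply \cref{thmba} there, and descend. Fix a lift $\tilde x_0\in\tilde X$ of $x_0$ and use it as origin on $\tilde M$, writing $|v|_{\tilde M}=d_{\tilde M}(\tilde x_0,v)$. Since $q$ is a local isometry for the associated Riemannian metrics, $|q(v)|\le|v|_{\tilde M}$ for every $v\in\tilde M$; conversely, since $M$ is complete, Hopf--Rinow guarantees that any $y\in M$ has at least one lift $\tilde y\in q^{-1}(y)$ with $|\tilde y|_{\tilde M}=|y|$. The finiteness of $a$-moments for $\mu$ then follows from \cref{covmea} and monotonicity of $a$: for any $y\in M$ and any lift $\tilde y$,
\begin{align*}
	\sum_{u\in X}\mu_y(u)a(|u|)
	=\sum_{v\in\tilde X}\tilde\mu_{\tilde y}(v)a(|q(v)|)
	\le\sum_{v\in\tilde X}\tilde\mu_{\tilde y}(v)a(|v|_{\tilde M})<\infty.
\end{align*}

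For the second assertion, given $f\in\H_a(M,L)$, the pull-back $\tilde f=f\circ q$ is $\tilde L$-harmonic and satisfies $|\tilde f(v)|\le C_fa(|q(v)|)\le C_fa(|v|_{\tilde M})$, so $\tilde f\in\H_a(\tilde M,\tilde L)$. Applying \cref{thmba} to $(\tilde M,\tilde L,\Gamma,\tilde\mu)$, the restriction $\tilde f|_{\tilde X}$ is $\tilde\mu$-harmonic and (by \cref{lemmh} in its proof) $\tilde f(y)=\tilde\mu_y(\tilde f|_{\tilde X})$ for all $y\in\tilde M$. Grouping this identity by fibers of $q$ and using \cref{covmea} yields
\begin{align*}
	f(q(y))=\tilde f(y)=\sum_{v\in\tilde X}\tilde\mu_y(v)f(q(v))=\sum_{u\in X}\mu_{q(y)}(u)f(u)=\mu_{q(y)}(f|_X),
\end{align*}
so $f|_X$ is $\mu$-harmonic, and its $a$-boundedness is immediate.

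Finally, for surjectivity, given $h\in\H_a(X,\mu)$, set $\tilde h=h\circ q$. The same fiber regrouping (with $h$ in place of $f|_X$) shows $\tilde h$ is $\tilde\mu$-harmonic on $\tilde X$, and $a$-boundedness lifts as before. By \cref{thmba} there is a unique $\tilde f\in\H_a(\tilde M,\tilde L)$ with $\tilde f|_{\tilde X}=\tilde h$, given by $\tilde f(y)=\tilde\mu_y(\tilde h)$; cf.\ \eqref{exten}. The hard part is showing that $\tilde f$ descends to $M$, that is, is invariant under the deck group $\Gamma_1=\{\gamma\in\Gamma\mid q\circ\gamma=q\}$ of $q$, so that $M=\Gamma_1\backslash\tilde M$. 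By construction the data $(\tilde F_v,\tilde V_v)_{v\in\tilde X}$ are pull-backs via $\tilde p=p\circ q$ of the data on $M_0$, hence $\Gamma$-invariant as a family, so \cref{lsm2}.\ref{lsm2b} gives $\tilde\mu_{\gamma y}(\gamma v)=\tilde\mu_y(v)$ for all $\gamma\in\Gamma$. Since $\tilde h$ is $\Gamma_1$-invariant by construction, one computes, for any $\gamma\in\Gamma_1$,
\begin{align*}
	\tilde f(\gamma y)
	=\sum_{v\in\tilde X}\tilde\mu_{\gamma y}(v)\tilde h(v)
	=\sum_{w\in\tilde X}\tilde\mu_{\gamma y}(\gamma w)\tilde h(\gamma w)
	=\sum_{w\in\tilde X}\tilde\mu_y(w)\tilde h(w)
	=\tilde f(y).
\end{align*}
Hence $\tilde f=f\circ q$ for a well-defined $L$-harmonic $f$ on $M$ with $f|_X=h$; its $a$-boundedness follows by evaluating the bound for $\tilde f$ at a lift $\tilde y$ of $y$ with $|\tilde y|_{\tilde M}=|y|$. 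Injectivity is immediate: $f|_X=0$ forces $\tilde f|_{\tilde X}=0$, whence $\tilde f=0$ by \cref{thmba} and so $f=0$. The whole argument thus hinges on the descent step, which is secured by combining the explicit LS-extension formula with the $\Gamma$-equivariance of $\tilde\mu$ inherited from the $\Gamma$-invariance of the lifted LS-data.
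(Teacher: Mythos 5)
Your proof is correct and follows essentially the same route as the paper's: lift everything to the $\Gamma$-cover $\tilde M$, apply \cref{thmba} there, and transfer back and forth via \cref{covmea} together with the facts that $q$ is distance-nonincreasing and that minimizing geodesics lift. The only (cosmetic) divergence is the descent step, which you secure via $\Gamma_1$-invariance of $\tilde f$ — tacitly assuming the deck group of $q$ acts transitively on its fibers — whereas the identity $\tilde f(y)=\tilde\mu_y(\tilde h)=\mu_{q(y)}(h)$ coming from \cref{covmea} already shows directly that $\tilde f$ is constant on the fibers of $q$; the paper packages this same point as $\Gamma'$-equivariance of the restriction isomorphism on $\tilde M$.
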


By \cref{exmom},
$\tilde\mu$ has finite $a$-moments for the growth functions $e^{\alpha r}$ for sufficiently small $\alpha>0$,
in particular for any growth function of subexponential growth.
Hence the statement of \cref{covres} implies \cref{thmbg} of the introduction.

\begin{proof}[Proof of \cref{covres}]
The first assertion is clear from \cref{covmea} since $q$ does not increase distances.

Let $f$ be a function on $M$ and $\tilde f$ be its lift to $\tilde M$.
Then $f$ is $L$-harmonic if and only if $\tilde f$ is $\tilde L$-harmonic.
If $f$ is $a$-bounded, then $\tilde f$ is $a$-bounded since $q$ does not increase distances.
Conversely, suppose that $\tilde f$ is $a$-bounded.
Let $x\in M$ and $c$ be a shortest geodesic segment from $x_0$ to $x$ in $M$.
Then the lift of $c$ to $\tilde M$ starting in $y_0$ is a shortest geodesic segment from $y_0$
to a point $y\in q^{-1}(y)$ with $d(y_0,y)=d(x_0,x)$.
Since $\tilde f$ lifts $f$ and is (therefore) constant on the fibers of $q$, we obtain
\begin{align*}
	|f(x)| = |\tilde f(y)| \le C_{\tilde f} a(|y|) = C_{\tilde f} a(|x|).
\end{align*}
Therefore $f$ is $a$-bounded,
and hence lifting defines an isomorphism
between the space of $a$-bounded $L$-harmonic functions on $M$
and the space of $a$-bounded $\tilde L$-harmonic functions on $\tilde M$
which are constant on the fibers of $q$.

Let $h$ now be a function on $X$ and $\tilde h$ be its lift to $\tilde X$.
Then $h$ is $\mu$-harmonic if and only if $\tilde h$ is $\tilde \mu$-harmonic,
by \cref{covmea} and since $\tilde h$ is constant on the fibers of $q$.
Moreover, by the argument above, $h$ is $a$-bounded if and only if $\tilde h$ is $a$-bounded.
Thus lifting  defines an isomorphism
between the space of $a$-bounded $\mu$-harmonic functions on $X$
and the space of $a$-bounded $\tilde\mu$-harmonic functions on $\tilde X$
which are constant on the fibers of $q$.

By \cref{thmba}, the restriction map $\H_a(\tilde M,\tilde L)\to\H_a(\tilde X,\tilde\mu)$
is an isomorphism which is equivariant under the group $\Gamma$ of covering transformations of $\tilde p$.
In particular, it is also equivariant with respect to the smaller group $\Gamma'$ of covering transformations of $q$.
Therefore restriction defines an isomorphism between the corresponding subspaces of $\Gamma'$-invariant functions.
But these are exactly the lifts of functions from $\H_a(M,L)$ and $\H_a(X,\mu)$, respectively.
\end{proof}

\section{Applications}
\label{secap}

In this section, we discuss the proofs of Theorems \ref{cornil}, \ref{thmnil},
and \ref{thmsol} from the introduction.
Recall that we are given a diffusion operator $L$
and a smooth volume element on a non-compact and connected manifold $M$,
which are invariant under a group $\Gamma$ acting properly discontinuously and cocompactly on $M$,
such that $L$ is symmetric on $C^\infty_c(M)$ with respect to the volume element.
In particular, $\Gamma$ is a finitely generated infinite group.

We choose an origin $x_0\in M$ such that the isotropy group of $x_0$ in $\Gamma$ is trivial.
In the case, where $\Gamma$ acts as a group of covering transformations,
any point of $M$ is of this kind.
In the general case,
the set of points in $M$ with trivial isotropy group in $\Gamma$ is open and dense.
Since the isotropy group of $x_0$ in $\Gamma$ is trivial,
the orbit map $\Gamma\to X=\Gamma x_0$ is bijective,
and we use it to identify $\Gamma$ with $X$.
We choose balanced LS-data as in \cref{susebal}
and consider the associated probability measure $\mu$ and random walk on $\Gamma$ as in \cref{suseran}.
If the $L$-diffusion on $M$ is transient or, equivalently,
the $\mu$-random walk on $\Gamma$ is transient, then $\mu$ satisfies the following three properties
(\cref{court}):
\begin{enumerate}[label=(P\arabic*)]
\item\label{mu1}
the support of $\mu$ is all of $\Gamma$;
\item\label{mu2}
$\mu$ is symmetric;
\item\label{mu3}
$\mu$ has finite exponential moment (for some sufficiently small exponent).
\end{enumerate}
In particular, in the transient case,
$\mu$ satisfies the properties required in the articles \cite{MPTY,MY,Pe} of Meyerovitch, Perl, Tointon, and Yadin
so that we may apply their results, using \cref{corbg}.

The $\mu$-random walk on $\Gamma$ is recurrent if and only if $\Gamma$
contains $\Z$ or $\Z^2$ as a subgroup of finite index \cite[Theorem 3.24]{Wo}.
In this case, we use the results of Kuchment and Pinchover \cite{KP} on Schr\"odinger operators
invariant under a properly discontinuous and (then also) free action of $A=\Z^k$.
For this application,
we let $\vf^2{\rm dv}$ be the $A$-invariant volume element on $M$
with respect to which $L$ is symmetric on $C^\infty_c(M)$.
Here $\vf$ is an $A$-invariant positive smooth function on $M$
and ${\rm dv}$ denotes the volume element of the Riemannian metric on $M$ induced by $L$.
Then $L$ is of the form
\begin{align*}
	Lf = \Delta f - 2\la\nabla\ln\vf,\nabla f\ra.
\end{align*}
Furthermore, renormalization with $1/\vf$ as in \cref{suselap} transforms $L$
into the $A$-invariant Schr\"odinger operator $S=\Delta+V$ with potential $V=-\Delta\vf/\vf$.
Since $S$ and $\vf$ descend to a Schr\"odinger operator $S_0$ and a positive $S_0$-harmonic function $\vf_0$,
the bottom of the spectrum of $S_0$ on $M_0=A\backslash M$ is $0$. 
Since $\Z^k$ is Abelian, hence amenable, the bottom of the spectrum of $S$
as an unbounded self-adjoint operator on $L^2(M,{\rm dv})$ is also $0$.
Hence Kuchment-Pinchover's \cite[Theorem 5.3]{KP} applies with their $\Lambda_0=0$.
Notice that the smooth functions $[x_j]$ there are equal to $\pm c(|g_j|+1)$ for some constant $c>0$,
where $x\in gD_0$ with $g=(g_1,\dots,g_k)\in A$ 
and $D_0$ denotes the Dirichlet domain about $x_0$ with respect to $A$.

\begin{proof}[Proof of \cref{cornil}]
Suppose first that the $L$-diffusion process on $M$ is transient.
Then the LS-measure on $\Gamma$ satisfies \ref{mu1}--\ref{mu3}. 
Let $f\in\H^d(M,L)$.
Then the restriction $h$ of $f$ to $\Gamma$ belongs to $H^d(\Gamma,\mu)$,
by \cref{corbg}.
But then $h$ is a polynomial of degree at most $d$
on a finite index subgroup $N$ of $\Gamma$, by \cite[Theorem 1.3]{MPTY}.
Therefore the restriction of $f$ to $N$ satisfies the claimed growth property,
by \cite[Proposition 2.7]{MPTY}.
Hence $f$ satisfies the same growth property, by \cref{mubound}.

Suppose now that the $L$-diffusion process on $M$ is recurrent.
Then the $\mu$-random walk on $\Gamma$ is recurrent,
and hence $\Gamma$ is a finite extension of $A=\Z$ or $A=\Z^2$.
Without loss of generality, we may assume that $\Gamma=A$.
Then, by \cite[Theorem 5.3.3]{KP} and the above renormalization, $f/\vf$ is of the form
\begin{align*}
	\frac{f}{\vf} = \sum_{0\le|j|\le d} [x]^j f_j(x),
\end{align*}
where the $f_j$ are $A$-invariant functions on $M$.
(In our case, $A=\Z$ or $A=\Z^2$, but \cite[Theorem 5.3]{KP} also holds for any $\Z^k$.)
\end{proof}

\begin{proof}[Proof of \cref{thmnil}]
Suppose again first that the $L$-diffusion process on $M$ is transient,
so that the LS-measure on $\Gamma$ satisfies \ref{mu1}--\ref{mu3}. 
Without loss of generality, we may assume that $\Gamma=N$.
Combining \cite[Theorems 1.5, 1.6 and Corollary 1.9]{MPTY} and \cite[Theorem 1.5]{Pe},
we have that $\H^d(N,\mu)$ is of finite dimension with
\begin{align*}
	\dim\H^d(N,\mu) = \dim\P^d(N) - \dim \P^{d-2}(N)
\end{align*}
for all $d\ge0$.
Now $\H^d(M,L)\cong\H^d(N,\mu)$ for all $d\ge0$, by \cref{corbg}.

In the recurrent case,
we have again that $\Gamma$ is a finite extension of $A=\Z$ or $A=\Z^2$.
Via renormalization as above,
the desired formula for the dimension of $\H^d(M,L)$ is now given in \cite[Theorem 5.3.2]{KP}.
\end{proof}

\begin{proof}[Proof of \cref{thmsol}]
Since we may assume that $\Gamma$ does not contain $\Z$ or $\Z^2$
as a subgroup of finite index,
we may assume without loss of generality that the $L$-diffusion on $M$ is transient.
By \cref{corbg}, we have $\H^1(M,L)\cong\H^1(\Gamma,\mu)$.
Furthermore, $\mu$ satisfies \ref{mu1}--\ref{mu3}.
Now $\Gamma$ is virtually solvable.
Hence $\Gamma$ is virtually nilpotent if $\H^1(\Gamma,\mu)$ is of finite dimension,
by \cite[Theorem 1.4]{MY}.
Conversely, if $\Gamma$ is virtually nilpotent,
then $\H^1(\Gamma,\mu)$ is of finite dimension, by \cref{thmnil}.
\end{proof}

\newpage


\begin{thebibliography}{HMZ}

\bibitem{Ba}
W.\,Ballmann,
On the Dirichlet problem at infinity for manifolds of nonpositive curvature.
\emph{Forum Math.} {\bf 1} (1989), no. 2, 201--213.

\bibitem{BE}
W.\,Ballmann and P.\,Eberlein,
Fundamental groups of manifolds of nonpositive curvature.
\emph{J. Differential Geom.} {\bf 25} (1987), no. 1, 1--22.

\bibitem{BL2}
W.\,Ballmann and F.\,Ledrappier,
Discretization of positive harmonic functions on Riemannian manifolds and Martin boundary.
\emph{Actes de la Table Ronde de G\'eom\'etrie Diff\'erentielle} (Luminy, 1992), 77--92,
S\'emin. Congr. 1, Soc. Math. France, Paris, 1996. 

\bibitem{BMP}
W.\,Ballmann, H.\,Matthiesen, and P.\,Polymerakis,
On the bottom of spectra under coverings.
Math. Zeitschrift 288 (2018), 1029-1036. 

\bibitem{BP}
W.\,Ballmann and P.\,Polymerakis,
Equivariant discretizations of diffusions, random walks, and harmonic functions.
MPI-Preprint 2019-41, arxiv.org/abs/1906.11716. 

\bibitem{BC}
P.\,B\'erard and P.\,Castillon,
Spectral positivity and Riemannian coverings. 
\emph{Bull. Lond. Math. Soc.} {\bf 45} (2013), no. 5, 1041--1048.

\bibitem{Ch}
S.\,Y.\,Cheng,
Liouville theorem for harmonic maps
(Geometry of the Laplace operator, Univ. Hawaii, Honolulu, Hawaii 1979),
\emph{Proc. Sympos. Pure Math. XXXVI}, Amer. Math. Soc., Providence, R.I.,
1980, 147--151.

\bibitem{CY}
S.\,Y.\,Cheng and S.\,T.\,Yau,
Differential equations on Riemannian manifolds and their geometric applications.
\emph{Comm. Pure Appl. Math.} {\bf 28} (1975), no. 3, 333--354.

\bibitem{CM}
T.H.\,Colding and W.P.\,Minicozzi,
Harmonic functions on manifolds.
\emph{Annals of Math.} {\bf 146} (1997), no. 3, 725--747.  

\bibitem{Fa}
C.\,Farsi,
Orbifold spectral theory.
\emph{Rocky Mountain J. Math.} {\bf 31} (2001), no. 1, 215--235.
 
\bibitem{Fu}
H.\,Furstenberg,
Random walks and discrete subgroups of Lie groups.
\emph{1971 Advances in Probability and Related Topics,} Vol. 1 pp. 1--63 Dekker, New York.
 
\bibitem{Gr}
M.\,Gromov,
Groups of polynomial growth and expanding maps.
\emph{Inst. Hautes Études Sci. Publ. Math.} {\bf 53} (1981), 53--73.

\bibitem{HJ}
B.\,Hua and J.\,Jost,
Polynomial growth harmonic functions on groups of polynomial volume growth.
\emph{Math. Z.} {\bf 280} (2015), no. 1-2, 551--567.
 
\bibitem{Ka2}
V.\,A.\,Kaimanovich,
Discretization of bounded harmonic functions on Riemannian manifolds and entropy.
\emph{Potential theory} (Nagoya, 1990), 213--223, de Gruyter, Berlin, 1992.

\bibitem{Kl}
B.\,Kleiner,
A new proof of Gromov's theorem on groups of polynomial growth.
\emph{J. Amer. Math. Soc.} {\bf 23} (2010), no. 3, 815--829. 

\bibitem{KP}
P.\,Kuchment and Y.\,Pinchover,
Liouville theorems and spectral edge behavior on abelian coverings of compact manifolds.
\emph{Trans. Amer. Math. Soc.} {\bf 359} (2007), no. 12, 5777--5815.
 
 \bibitem{Le}
A.\,Leibman,
Polynomial mappings of groups,
\emph{Israel J. Math.} {\bf 129} (2002), 29--60.

\bibitem{LS}
T.\,Lyons and D.\,Sullivan,
Function theory, random paths and covering spaces.
\emph{J. Differential Geom.} {\bf 19} (1984), no. 2, 299--323.

\bibitem{MPTY}
T.\,Meyerovitch, I.\,Perl, M.\,Tointon, and A.\,Yadin,
Polynomials and harmonic functions on discrete groups.
\emph{Trans. Amer. Math. Soc.} {\bf 369} (2017), no. 3, 2205--2229.

\bibitem{MY}
T.\,Meyerovitch and A.\,Yadin,
Harmonic functions of linear growth on solvable groups.
\emph{Israel J. Math.} {\bf 216} (2016), no. 1, 149--180.

\bibitem{Pe}
I.\,Perl,
Harmonic functions on locally compact groups of polynomial growth.
Preprint 2018, arXiv:1705.08196v3.

\bibitem{Wo}
W.\,Woess,
\emph{Random walks on infinite graphs and groups.}
Cambridge Tracts in Mathematics 138.
Cambridge University Press, Cambridge, 2000. xii+334 pp.

\bibitem{Y1}
S.-T.\,Yau,
Nonlinear analysis in geometry.
\emph{Enseign. Math.} (2) {\bf 33} (1987), no. 1-2, 109--158.

\bibitem{Y2}
S.-T.\,Yau,
Differential geometry: partial differential equations on manifolds
 (Los Angeles, CA, 1990), 1--28,
\emph{Proc. Sympos. Pure Math.} 54, Part 1, 
Amer. Math. Soc., Providence, RI, 1993.

\end{thebibliography}
\end{document}